\newtheorem{theorem}{Theorem}
\newtheorem{corollary}{Corollary}
\newtheorem{definition}{Definition}
\newtheorem{proposition}{Proposition}
\theoremstyle{remark}
\newtheorem{remark}{Remark}
\DeclareMathOperator{\id}{\it id}
\DeclareMathOperator{\pr}{pr}
\DeclareMathOperator{\ad}{ad}
\DeclareMathOperator{\Ad}{Ad}
\DeclareMathOperator{\SU}{SU}
\DeclareMathOperator{\Diff}{Diff}
\DeclareMathOperator{\Vect}{Vect}
\DeclareMathOperator{\Vir}{Vir}
\DeclareMathOperator{\Mob}{\text{M\"ob}}
\DeclareMathOperator{\Lieg}{\mathfrak{g}}
\DeclareMathOperator{\Lieh}{\mathfrak{h}}
\DeclareMathOperator{\Liep}{\mathfrak{p}}
\DeclareMathOperator{\Liek}{\mathfrak{k}}
\DeclareMathOperator{\Lied}{\mathfrak{d}}
\DeclareMathOperator{\comp}{{\mathbb{C}}}
\DeclareMathOperator{\integer}{{\mathbb{Z}}}
\DeclareMathOperator{\real}{\mathbb{R}}
\DeclareMathOperator{\unitD}{\mathbb{D}}
\DeclareMathOperator{\calF}{\mathcal{F}}
\DeclareMathOperator{\calG}{\mathcal{G}}
\DeclareMathOperator{\calH}{\mathcal{H}}
\DeclareMathOperator{\calA}{\mathcal{A}}
\DeclareMathOperator{\calE}{\mathcal{E}}
\DeclareMathOperator{\calD}{\mathcal{D}}
\DeclareMathOperator{\frakA}{\mathfrak{A}}
\DeclareMathOperator{\mob}{\mathfrak{m}\ddot{\mathfrak{o}}\mathfrak{b}}
\DeclareMathOperator{\nuk}{ker}
\DeclareMathOperator{\Rot}{Rot}
\DeclareMathOperator{\rot}{\mathfrak{rot}}
\DeclareMathOperator{\wDiff}{\widetilde{Diff}}
\DeclareMathOperator{\llangle}{\langle \! \langle}
\DeclareMathOperator{\rrangle}{\rangle \! \rangle}
\title[Sub-Riemannian structures...]{Sub-Riemannian structures corresponding to K\"ahlerian metrics on the universal Teichm\"uller space and curve}
\author[E. Grong, I. Markina, A. Vasil'ev]{Erlend Grong, Irina Markina, and Alexander Vasil'ev}
\address{Department of Mathematics,
University of Bergen, P.O.~Box~7803, Bergen N-5020, Norway}
\email{erlend.grong@math.uib.no}
\email{irina.markina@math.uib.no}
\email{alexander.vasiliev@math.uib.no}
\thanks{The authors have been  supported by the grants of the Norwegian Research Council \#204726/V30 and  \#213440/BG}
\subjclass[2010]{Primary 37K05, 58B25, 53D30; Secondary 30C35, 70H06}
\keywords{Teichm\"uller space, group of diffeomorphisms, Lie-Fr\'echet group, 
Virasoro-Bott group, Virasoro algebra, sub-Riemannian geometry, Euler-Arnold equation , geodesic, K\"ahlerian metric, Velling-Kirillov metric, Weil-Petersson metric}
\begin{document}

\begin{abstract}
We consider the group of sense-preserving diffeomorphisms $\Diff S^1$ of the unit circle and its central extension, the Virasoro-Bott group, with their respective horizontal distributions chosen to be Ehresmann connections with respect to a projection to the smooth universal Teichm\"uller space and the universal Teichm\"uller curve associated to the space of normalized univalent functions. We find formulas for the normal geodesics with respect to the pullback of the invariant K\"ahlerian metrics, namely, the Velling-Kirillov metric on the class of normalized univalent functions and  the Weil-Petersson metric on the universal Teichm\"uller space. The geodesic  equations are sub-Riemannian analogues of the Euler-Arnold equation and lead  to the CLM, KdV, and other known non-linear PDE.
\end{abstract}
\maketitle

\section{Introduction}

Arnold \cite{Arnold} proposed in 1966 a program of the geometric approach to hydrodynamics, which ultimately led  to  general geodesic equations on Lie algebras
of infinite-dimensional Lie-Fr\'echet groups of volumemorphisms of a compact finite-dimensional Riemannian manifold, arriving at certain known equations of mathematical physics regarding a chosen Riemannian metric. These equations are now often referred to as the Euler-Arnold equations, see also~\cite{K}.
The Lie-Fr\'echet group of sense-preserving diffeomorphisms $\Diff S^1$ of the unit circle $S^1$ is one of the simplest, and then, important examples of infinite-dimensional Lie groups modeled on the Fr\'echet space of all $C^{\infty}$-smooth functions $h\colon S^1=\mathbb R / 2\pi \mathbb Z\to\mathbb R$ endowed with the countable family of seminorms $\|h\|_n=\max_{\theta\in S^1}|\frac{d^n}{d\theta^n}h(\theta)|$, $n\geq 0$. The interest to this group comes from Conformal Field Theory, where this group together with its universal central extension, the Virasoro-Bott group $\Vir$, occurs as a space of reparametrizations of a closed string. Two non-trivial (of three possible) coadjoint orbits of the group $\Vir$ are the homogeneous spaces $B=\Diff S^1/\Rot$ and $M=\Diff S^1/\Mob$, where $B$ is a holomorphic disk fiber space over $M$,   $\Mob$ is the group of M\"obius automorphisms of the unit disk restricted to $S^1$, and $\Rot\simeq S^1$ is its subgroup of rotations associated to the circle $S^1$ itself, see e.g., \cite{K, Kirillov202, Segal}. The space $M$ is referred to as a smooth approximation of the {\it universal Teichm\"uller space} $\mathcal T$, see~\cite{TT}, and $B$ as a smooth approximation of the {\it universal Teichm\"uller curve} $\mathcal T(1)$.  Moreover, the natural inclusion $M\hookrightarrow \mathcal T(1)$ is holomorphic~\cite{NV}. The space $B$ contains all necessary information on the construction of the unitary representation of $\Diff S^1$ due to Kirillov and Yur'ev \cite{Kirillov4, Unirreps}. The group $\Diff S^1$ acts on $B$ and $M$, and it is natural to consider the manifold $B$ as a base space for the principal bundle $\Rot{\longrightarrow} \Diff S^1\stackrel{\pi_1}{\longrightarrow} B$, and the manifold $M$ as a base space for the principal bundle $\Mob {\longrightarrow} \Diff S^1\stackrel{\pi_2}{\longrightarrow} M$. The Lie algebra of $\Diff S^1$ is identified with the space $\Vect S^1$ of all smooth real vector fields on $S^1$ with the Lie brackets as the negative of the usual commutator. This identification can be made by associating the equivalence class of curves $[t\to\gamma(t)]\in T_1\Diff S^1$ with the vector field on $S^1$, $Xh(\theta)=\frac{d}{dt}h(\gamma(t))\big|_{t=0}$, $\gamma(0)=\theta$, where $h\in C^{\infty}(S^1, \mathbb R)$ and $\theta\in S^1$.  We write $v\in \Vect S^1$ instead of $v\partial_{\theta}$.  Let us define a real valued form
$\eta_0$,   associating to every $v\in \Vect S^1$ its mean value
\[
\eta_0(v)=\frac{1}{2\pi}\int_{0}^{2\pi}v(\theta)d\theta,
\]
and
the complex valued form
$\eta_1$,   associating to every $v\in \Vect S^1$ the number
\[
\eta_1(v)=\frac{1}{2\pi}\int_{0}^{2\pi}e^{-i\theta}v(\theta)d\theta.
\]
Let us denote by $\Vect_0S^1$ the kernel $\nuk\eta_0$ and let $\Lied = \ker \eta_0 \cap \ker \eta_1$ denote the complement to the Lie algebra $\mob$ of the group $\Mob$ in $\Vect S^1$.

Then we are able to define subbundles $\mathcal H$ and $\mathcal D$ of $T\Diff S^1$  by left translations of $\Vect_0S^1$ and $\Lied$ by $\Diff S^1$ respectively. 
Similarly, we define  subbundles $\mathcal E$ and $\mathcal C$ of the central extension $\Vir$ of $\Diff S^1$ obtained by left translations of $(\Vect_0S^1,0)$ and $(\Lied,0)$ by $\Vir$. 

Let $\mathbf{g}$ be a Riemannian  metric on $T\Diff S^1$, and let  $\mathbf{h}_{\mathcal H}$ be its restriction to $\mathcal H$ and let $\mathbf{h}_{\mathcal D}$ be its restriction to $\mathcal D$. Correspondingly, if $\mathbf{u}$ is a Riemannian  metric on $\Vir$, then we denote by  $\mathbf{h}_{\mathcal E}$ and  $\mathbf{h}_{\mathcal C}$ its restriction to $\mathcal E$ and $\mathcal C$.

Denote by $\rot$ the subalgebra of $\Vect S^1$ of constant vector fields corresponding to the subgroup of rotations $S^1$. Then $\Vect S^1= \Vect_0 S^1\oplus \rot=\Lied\oplus \mob$ and $T\Diff S^1=\mathcal H\oplus \mathcal R=\mathcal D\oplus \mathcal M$, where $\mathcal R$ and $\mathcal M$ are subbundles obtained by left translations of $\rot$ and $\mob$. Notice that $\mathcal R=\ker d\pi_1$ and  $\mathcal M=\ker d\pi_2$. Therefore, the subbundles $\mathcal H$ and $\mathcal D$ of $T\Diff S^1$ are the Ehresmann connections. Similarly, the subbundles $\mathcal E$ and $\mathcal C$ are the Ehresmann
connections on $\Vir$.

A smooth curve $\gamma \colon [0,1]\to\Diff S^1$  is called $\mathcal H$- horizontal  if $\dot \gamma\in \mathcal H_{\gamma(t)}$  for every $t\in[0,1]$. Similar definitions are valid for the distributions $\mathcal D$, $\mathcal E$, and $\mathcal C$.

%Most of the time we use the notation $\mathbf{h}:=\mathbf{h}_{\mathcal H}$ for simplicity. 
We look for  $\mathcal H$-horizontal curves $\gamma(t)$ connecting two points $a_0$ and $a_1$, $\gamma(0)=a_0$, $\gamma(1)=a_1$ on $\Diff S^1$, that give the critical values for the energy functional 
\begin{equation}\label{energy} \nonumber
E(\gamma)=\frac{1}{2}\int_0^1\mathbf{h}(\dot{\gamma}, \dot{\gamma})dt.
\end{equation}
Analogously, we formulate the problem for the groups $\Diff S^1$ and $\Vir$ and the distributions $\mathcal D$, $\mathcal E$, and $\mathcal C$. The objects $(\Diff S^1, \mathcal H, \mathbf{h}_{\mathcal H})$, $(\Diff S^1, \mathcal D, \mathbf{h}_{\mathcal D})$, $(\Vir, \mathcal E, \mathbf{h}_{\mathcal E})$ and $(\Vir, \mathcal C, \mathbf{h}_{\mathcal C})$ are
infinite-dimensional analogs to the classical sub-Riemannian manifolds which in finite dimensions have been actively studied recently, and widely documented, see e.g., \cite{AS, Mon, Str1, Str2}.
In the present paper we address two problems. The first one, the problem of controllability, or whether it is possible to join arbitrary points $a_0$ and $a_1$ on $\Diff S^1$ or $\Vir$
by $\mathcal H$- (or $\mathcal E$-) horizontal curves, was treated in \cite{GMV}.  The second one is to find geodesic equations for critical curves with respect to the metric $\mathbf{h}$ with the corresponding index $\mathcal H$, $\mathcal D$, $\mathcal E$, or $\mathcal C$. These equations are sub-Riemannian analogues of the Euler-Arnold equation. The metrics are chosen to be either Sobolev, or  the pullback of the invariant K\"ahlerian metric, in particular the {\it Velling-Kirillov metric}~\cite{Kirillov1, Kirillov4, Vel}, on the class of normalized univalent functions related to $B\simeq \mathcal T(1)$ by conformal welding, or  with respect to the pullback of the {\it Weil-Petersson metric} on the universal Teichm\"uller space~$\mathcal T$. We find that the geodesic equations are analogues of the Constantin-Lax-Majda (CLM), Camassa-Holm, Huter-Saxton, KdV, and other known non-linear PDE. Inspired by the above problems  we develop an analogue of sub-Riemannian geometry on infinite-dimensional Lie groups. Equations for the sub-Riemannian geodesics for $\mathbf h_{\calH}$ and $\mathbf h_{\calE}$ previously appeared in~\cite{GMV}.

 \section{Infinite-dimensional Lie groups with constraints}\label{Lie group}

In this section we apply variational calculus to determine sub-Riemannian geodesics for infinite-dimensional Lie groups with invariant subbundles. This is a special case of the calculus developed by the authors in \cite{GMV} for finding geodesics in general infinite-dimensional manifolds. In particular, we introduce {\it semi-rigid curves} that play a similar role
to abnormal geodesics in finite-dimensional sub-Riemannian geometry. We will work with Lie groups modeled on  {convenient vector spaces} following the terminology  found in \cite{KrieglMichor}. A convenient vector space is a locally convex vector space, where the most general notion of smoothness, based on the notion of smooth curves, is introduced and the vector space satisfies a weak completeness condition which is called $c^{\infty}$-completeness. The respective topology is given by  $c^{\infty}$-open sets of a convenient vector space. For a short introduction, we refer the reader to \cite{Michor} or \cite{KMGroup}. In particular, Fr\'echet
spaces are convenient.

\subsection{Regular Lie groups}
Let $G$ be a Lie group modeled on $c^{\infty}$-open sets of a convienient vector space with the Lie algebra $\Lieg$.  We use the symbol $\ell_a$ to denote the left multiplication by an element $a\in G$. Let us define the {\it left Maurer-Cartan form} $\kappa^\ell$ as a $\Lieg$-valued one-form on $G$, given by the formula
$$\kappa^{\ell}(v) = d \ell_{a^{-1}} v, \qquad v \in T_aG.$$
Let us use the notation $C^{\infty}(\real,G)$ for the space of smooth maps $\gamma\colon \real\to G$, and the notation  $C^{\infty}(\real,\mathfrak g)$ for the convenient vector space of smooth maps from $\real$ to the Lie algebra $\mathfrak g$. 
To any smooth curve $\gamma\colon \real\to G$ one associates a smooth curve $u(t) = \kappa^\ell(\dot \gamma(t))$, $t\in \real$ in the Lie algebra $\Lieg$ which is called {\it the left logarithmic derivative} of $\gamma$. 
Throughout the paper we  assume  that  the Lie groups are {\it regular}, which essentially requires that the above correspondence from $\gamma\in G$ to $u\in\mathfrak g$ remains true the other way around. Let us give a precise definition.

\begin{definition}[\cite{KMGroup,Milnor}]
A Lie group $G$ is called regular if it satisfies the following two properties.
\begin{itemize}
\item[1.] Any smooth curve 
$$
\begin{array}{ccccc}
u\colon & \real & \to & \Lieg
\\
& t  & \mapsto & u(t)
\end{array}
$$ 
is the left logarithmic derivative of a curve $\gamma\in C^{\infty}(\real,G)$ with $\gamma(0) = \mathbf 1$, where $\mathbf1$ stands for the identity of the group $G$;
\item[2.] The mapping
$$
\begin{array}{ccc} 
C^\infty(\real, \Lieg) & \to & G 
\\ 
{[t \mapsto u(t)]} & \mapsto  & \gamma(1) 
\end{array}
$$
is smooth. Here $\gamma$ is a solution to the equation $\kappa^\ell(\dot \gamma(t)) = u(t)$, $t\in \real$ with the initial data $\gamma(0) = \mathbf 1$.
\end{itemize}
\end{definition}

Let us make the following remarks on regular Lie groups. 

$\bullet\ $
There are no known examples of non-regular Lie groups. The term `regular' is also used for somewhat stricter conditions, see~\cite{LieFre}. 

$\bullet\ $ The second condition of the above definition of regularity is a generalization of the exponential map produced by the constant map $[t\to u_0]\in C^\infty(\real, \Lieg)$. Thus, if a Lie group is regular, then the exponential map $\exp_G: \Lieg \to G$ exists and it is smooth. However, many of the properties that we are used for the group exponential map in finite dimensions do not necessarily hold in infinite dimensions. For example, it can happen that the exponential map is not locally surjective and  the  Baker-Campbell-Hausdorff formula does not work.

$\bullet\ $ If there is a curve $\gamma$, starting from $\mathbf 1\in G$ with the left logarithmic derivative $u(t)$, then for any $a \in G$, there is a curve $\widetilde \gamma$ starting from $a$ and having the same left logarithmic derivative $u(t)$.

$\bullet\ $ Regularity of a Lie group can be similarly defined in terms of the right logarithmic derivative. Let $r_a$ denote the right translation by $a$, and let $\kappa^r(v) = d r_{a^{-1}} v$, $v \in T_aG$ be the right Maurer-Cartan form.  Then for a given $\gamma\colon \real\to G$, the curve $u(t) = \kappa^r(\dot \gamma(t))$, $t\in \real$, is called the right logarithmic derivative. In this case regularity of the group implies uniqueness of the solution to the initial value problem $\kappa^r(\dot \gamma(t)) = u(t), \gamma(0) = \mathbf 1$. The property of a group to be regular does not depend on the choice between left or right translations in the definition. 

%%%%%%%%%%%%%%%%%%%%%%%%%%%%%%%%%%%%%%%%%%
\subsection{Variational calculus on regular Lie groups} \label{sec:VarCalc}
From now on, we parametrize all curves on the domain $I := [0,1]$ unless otherwise is stated.

Let $G$ be a regular Lie group modeled on  $c^\infty$-open subsets of a convenient vector space. We say that a smooth map 
\begin{equation}\label{gammas}
\begin{array}{ccccc}
F\colon I\times(-\epsilon,\epsilon) & \to &G 
\\  
(t,s) &\mapsto & F(t,s)
\end{array}
\end{equation}
is a {\it variation of a curve $\gamma\colon I\to G$} if 
\begin{equation}\label{fixpoints}
F(t,0)=\gamma(t),\quad F(0,s)=\gamma(0),\quad F(1,s)=\gamma(1).
\end{equation}
We will write a variation simply as $\gamma^s$ rather than $F$ for the sake of simplicity, where $\gamma^s(t)=F(t,s)$. Observe that $[s \mapsto \gamma^s]$ can be considered as a smooth curve from $(-\epsilon,\epsilon)$ to the space $C^\infty(I, G)$ with $\gamma^0 = \gamma$. 

Let $G$ be a regular Lie group whose Lie algebra $\Lieg$ is endowed with an inner product  $\langle  \cdot ,\cdot \rangle$. Let $\mathbf g$ be a left-invariant metric on $G$ obtained by left translation of this inner product,~i.~e., 
$$\mathbf g(v,w) = \langle \kappa^\ell(v), \kappa^\ell(w) \rangle,\qquad  v,w\in T_aG,\quad \text{for $\forall \ a\in G$}.$$
Define the energy functional as $E(\gamma) =\frac{1}{2} \int_0^1 \mathbf g(\dot \gamma, \dot \gamma) \, dt$. We want to describe the  curves which are the critical points of the energy functional and satisfy $\gamma(0) = a_0$ and $\gamma(1) = a_1$, for two given points $a_0, a_1 \in G$,~i.~e.,   such curves $\gamma$ that satisfy the equation
$$\partial_s E(\gamma^s)|_{s=0} = 0, \text{ for any variation } \gamma^s.$$
We call them  Riemannian geodesics.

In order to write down the geodesic equations, we give the following observations. For a variation $\gamma^s$ of $\gamma$, we define curves $u^s$ and $z$ in $\Lieg$ by $u^s(t)= \kappa^\ell(\dot\gamma^s(t))$ and
\begin{equation} \label{eq:firstvariation} 
z(t) = \kappa^\ell(\partial_s \gamma^s(t)) |_{s=0}. 
\end{equation}
They are related by the known equality  
\begin{equation}\label{eq:partials}
\partial_s u^s(t)|_{s=0} = \dot z(t) + [u(t), z(t)].
\end{equation}
Indeed, by making use of the Cartan equation $d\kappa^\ell(v,w) = - [\kappa^\ell(v), \kappa^\ell(w)]$, we get
\begin{equation}\label{eq:1}
d\kappa^\ell(\partial_s \gamma^{s}(t), \partial_t\gamma^{s}(t))|_{s=0} =-[\kappa^\ell(\partial_s\gamma^{s}(t)),\kappa^\ell(\partial_t\gamma^{s}(t))]|_{s=0}= [u(t), z(t)]. \nonumber
\end{equation}
On the other hand, if $F^*$ denotes the pullback by $F$ in~\eqref{gammas}, then we obtain
\begin{align}\label{eq:2}
d\kappa^\ell(\partial_s \gamma^{s}(t), \partial_t\gamma^{s}(t)))|_{s=0} & = d (F^*\kappa^\ell)(\partial_s, \partial_t)|_{s=0} \nonumber
\\
&=\partial_s \big((F^*\kappa^\ell)(\partial_t)\big)|_{s=0} - \partial_t\big( (F^*\kappa^\ell)(\partial_s)\big)|_{s=0} \nonumber
\\
& = \partial_s\kappa^\ell(\partial_t\gamma^s(t))|_{s=0}-\partial_t \kappa^\ell(\partial_s\gamma^s(t)))|_{s=0} = \partial_s u^{s}(t)|_{s=0} - \partial_{t} z(t). \nonumber
\end{align}
Inspired by \eqref{eq:partials}, we introduce the linear map for any fixed $u\in C^{\infty}(I,\mathfrak g)$
\begin{equation}\label{tu}
\begin{array}{rccc} \tau_u: & C^\infty(I, \Lieg) & \to & C^\infty(I, \Lieg) \\
& x & \mapsto & \dot x + [u,x]  \end{array}.
\end{equation}
This allows us to rewrite \eqref{eq:partials} as $\partial_s u^s|_{s=0} = \tau_u z$. We remark the following.

 \begin{proposition}[\cite{GMV}] \label{lemma:insert0}
For any $y$ and $u \in C^\infty(I,\Lieg)$, there is a unique $x \in C^\infty(I,\Lieg)$, satisfying
\begin{equation} \label{tauinverse} \tau_u x = y, \qquad x(0) = 0.\end{equation}
\end{proposition}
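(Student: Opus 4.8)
The plan is to recognize \eqref{tauinverse} as a linear inhomogeneous first-order ODE in the Lie algebra, namely $\dot x = -[u,x] + y = -\ad_{u}x + y$ with $x(0)=0$, and to solve it by the method of variation of parameters. The essential point is that, because $\Lieg$ is merely a convenient (possibly Fr\'echet) vector space, one cannot appeal to the Picard--Lindel\"of theorem; instead I would manufacture the fundamental solution of the homogeneous equation directly from the \emph{regularity} of $G$, thereby converting an analytic existence problem into the already-assumed group-theoretic one.

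First I would invoke condition~1 of regularity to produce the curve $\gamma\in C^{\infty}(\real,G)$ with $\gamma(0)=\mathbf 1$ and left logarithmic derivative $\kappa^{\ell}(\dot\gamma)=u$. Differentiating the adjoint representation along $\gamma$ gives the identity $\tfrac{d}{dt}\Ad_{\gamma(t)}=\Ad_{\gamma(t)}\circ\ad_{u(t)}$, and hence, upon differentiating $\Ad_{\gamma(t)}\Ad_{\gamma(t)^{-1}}=\id$, the companion identity
\[
\tfrac{d}{dt}\Ad_{\gamma(t)^{-1}} = -\,\ad_{u(t)}\circ\Ad_{\gamma(t)^{-1}} .
\]
This says precisely that $\Phi(t):=\Ad_{\gamma(t)^{-1}}$ is the propagator of the homogeneous equation $\tau_u w=0$. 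With this in hand, variation of parameters predicts the explicit candidate
\[
x(t)=\Ad_{\gamma(t)^{-1}}\int_0^t \Ad_{\gamma(s)}\,y(s)\,ds .
\]
A direct computation using the two displayed identities then confirms $\tau_u x=\Ad_{\gamma(t)^{-1}}\tfrac{d}{dt}\!\int_0^t\Ad_{\gamma(s)}y(s)\,ds=y$ and $x(0)=0$.

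For uniqueness I would take two solutions and let $w$ be their difference, so that $\tau_u w=0$ and $w(0)=0$. Using $\tfrac{d}{dt}\Ad_{\gamma(t)}=\Ad_{\gamma(t)}\ad_{u}$ one computes $\tfrac{d}{dt}\bigl(\Ad_{\gamma(t)}w(t)\bigr)=\Ad_{\gamma(t)}\tau_u w(t)=0$, so $\Ad_{\gamma(t)}w(t)$ is constant and equals $\Ad_{\mathbf 1}w(0)=0$; applying $\Ad_{\gamma(t)^{-1}}$ gives $w\equiv0$.

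The main obstacle is establishing that the candidate formula indeed defines a \emph{smooth} curve into $\Lieg$ within the convenient calculus, since ordinary ODE techniques are unavailable. Here I would lean on the smoothness of the adjoint action together with the facts that the assignment $s\mapsto\Ad_{\gamma(s)}y(s)$ is smooth, that integration $C^\infty(I,\Lieg)\to C^\infty(I,\Lieg)$ is a bounded, hence smooth, operation on convenient vector spaces, and that composing with the smooth curve $t\mapsto\Ad_{\gamma(t)^{-1}}$ preserves smoothness; regularity condition~2 guarantees in addition that this construction depends smoothly on the data $u$, which is what one ultimately needs for the variational calculus that follows.
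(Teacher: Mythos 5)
Your proof is correct and follows essentially the same route as the paper: the explicit formula $\tau_u^{-1}(y)(t)=\Ad_{\gamma(t)^{-1}}\int_0^t \Ad_{\gamma(\tilde t)}\,y(\tilde t)\,d\tilde t$ that you obtain by variation of parameters, using regularity to produce $\gamma$ and $\Ad_{\gamma(t)^{-1}}$ as the propagator of the homogeneous equation, is exactly the formula the paper records for $\tau_u^{-1}$. Your verification and the $\Ad_{\gamma(t)}$-integrating-factor uniqueness argument are the standard way this is justified, so nothing is missing.
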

If $x$ satisfies \eqref{tauinverse}, then we  write $x = \tau_u^{-1} y.$ Explicitly,
$$
\tau_u^{-1}(y)(t)= \Ad_{\gamma(t)^{-1}} \int_0^t \Ad_{\gamma(\tilde t)} y(\tilde t) \, d\tilde t,
$$
where $\Ad$ is the adjoint action of $G$ on $\mathfrak g$, and $\gamma$ is a curve with the left logarithmic derivative $u$.

We define an inner product on the space $C^\infty(I, \Lieg)$ by
$$\llangle x, y \rrangle = \int_0^1 \langle x(t), y(t) \rangle \, dt,\qquad x,y\colon I\to \Lieg.$$
Then  the variation of the energy functional $E=\frac{1}{2} \int_0^1 \mathbf g(\dot \gamma(t) , \dot \gamma(t)) \, dt$ is written as
\begin{equation}\label{product}
 \partial_s E(\gamma^s) |_{s=0}  = \llangle u, \tau_u z \rrangle,
\end{equation}
for any variation $\gamma^s$ of $\gamma$ and $z$ defined by~\eqref{eq:firstvariation}. 
Indeed, the equation~\eqref{eq:partials} and the definition of the map $\tau_u$ imply
\begin{align*} \partial_s E(\gamma^s) |_{s=0} & = \int_0^1 \langle u(t), \partial_s u^s(t) |_{s=0} \rangle dt \\
& = \int_0^1 \langle u(t), \dot z(t) + [u(t), z(t)] \rangle dt =  \int_0^1 \langle u(t), \tau_u(z)(t) \rangle dt 
=\llangle u, \tau_u z \rrangle.
\end{align*}

Let $\frakA$ be the collection of curves
\begin{equation}\label{AA}
\frakA = \{x \in C^\infty(I,\Lieg) \, : \, x(0) = x(1) = 0\},
\end{equation}
and let $\tau_u\frakA$ be its image under the map $\tau_u$.
It is obvious that $z$ defined in~\eqref{eq:firstvariation} belongs to $\frakA$. In order to study the critical points of the energy functional $E$, we first  characterize the orthogonal complement $(\tau_u \frakA)^\perp$ to $\tau_u\mathfrak A$ with respect to the inner product $\llangle \cdot , \cdot \rrangle$. We assume that the adjoint map $\ad_x^{\top}$ to  $\ad_x: y \mapsto [x,y]$ exists for any $x \in \Lieg$ with respect to the inner product $\langle \cdot , \cdot \rangle$. 

\begin{proposition}\label{orthog}
Let $\gamma$ be a curve in $G$ with the left logarithmic derivative $u$. 
If $w \in (\tau_u \frakA)^\perp$, then $w$ is a solution to the equation 
\begin{equation}\label{diffeq}
\dot w = \ad_u^{\top} (w).
\end{equation}
\end{proposition}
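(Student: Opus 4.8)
The plan is to unwind the definition of the orthogonal complement and then transfer the time-derivative off the test curve by integration by parts. By definition, $w \in (\tau_u \frakA)^\perp$ means that $\llangle w, \tau_u x \rrangle = 0$ for every $x \in \frakA$, that is, for every smooth curve $x \colon I \to \Lieg$ with $x(0) = x(1) = 0$. Writing out $\tau_u x = \dot x + [u,x]$ from \eqref{tu} and using the definition of the inner product on $C^\infty(I, \Lieg)$, this condition reads
\begin{equation*}
\int_0^1 \langle w(t), \dot x(t) \rangle \, dt + \int_0^1 \langle w(t), [u(t), x(t)] \rangle \, dt = 0.
\end{equation*}

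In the second integral I would invoke the standing assumption that the transpose $\ad_u^\top$ exists, rewriting $\langle w, [u,x] \rangle = \langle w, \ad_u x \rangle = \langle \ad_u^\top w, x \rangle$. In the first integral, since the inner product $\langle \cdot, \cdot \rangle$ on $\Lieg$ is fixed and independent of $t$, the Leibniz rule gives $\frac{d}{dt}\langle w, x \rangle = \langle \dot w, x \rangle + \langle w, \dot x \rangle$; integrating over $I$ and using $x(0) = x(1) = 0$ to kill the boundary term yields $\int_0^1 \langle w, \dot x \rangle \, dt = - \int_0^1 \langle \dot w, x \rangle \, dt$. Combining the two, the orthogonality condition becomes
\begin{equation*}
\int_0^1 \langle \ad_u^\top w(t) - \dot w(t), \ x(t) \rangle \, dt = 0 \quad \text{for all } x \in \frakA.
\end{equation*}

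It then remains to apply a fundamental lemma of the calculus of variations to deduce that the first slot of the integrand vanishes identically. Setting $f := \ad_u^\top w - \dot w$, a curve in $\Lieg$, I would test against the product curves $x(t) = \phi(t) v$, where $\phi \in C^\infty(I, \real)$ satisfies $\phi(0) = \phi(1) = 0$ and $v \in \Lieg$ is arbitrary; each such $x$ belongs to $\frakA$. The condition reduces to $\int_0^1 \phi(t) \langle f(t), v \rangle \, dt = 0$ for all admissible scalar $\phi$, whence the classical scalar fundamental lemma forces $\langle f(t), v \rangle = 0$ for every $t \in I$ and every $v \in \Lieg$. Positive-definiteness of $\langle \cdot, \cdot \rangle$ then gives $f \equiv 0$, that is, $\dot w = \ad_u^\top(w)$, as claimed.

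The main point needing care in the infinite-dimensional convenient setting is the legitimacy of this final step: one must confirm that $f = \ad_u^\top w - \dot w$ is genuinely a continuous (indeed smooth) curve into $\Lieg$, so that $t \mapsto \langle f(t), v \rangle$ is continuous and the scalar fundamental lemma applies, and that the product test curves $\phi(\cdot)\, v$ are honest smooth elements of $\frakA$ — both of which are guaranteed by convenient calculus. The integration by parts is harmless once the Leibniz rule for the fixed inner product is granted, and $\ad_u^\top$ is available by hypothesis, so the only substantive work is this nondegeneracy argument.
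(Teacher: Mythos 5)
Your proof is correct and follows essentially the same route as the paper: unwind $\llangle w, \tau_u x\rrangle = 0$, integrate by parts using $x(0)=x(1)=0$, and move $[u,\cdot]$ across the inner product via $\ad_u^{\top}$. The only difference is that you spell out the final nondegeneracy step (the fundamental lemma of the calculus of variations, via test curves $\phi(t)v$), which the paper leaves implicit in its concluding ``Hence.''
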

\begin{proof}
If $w \in (\tau_u \frakA)^\perp$, then for any $x \in \frakA$, we have
\begin{align*}
0 = \llangle w, \tau_u x \rrangle & = \int_0^1 \langle w , \dot x + [u,x] \rangle \, dt = - \int_0^1 \langle \dot w - \ad_u^{\top}(w) , x \rangle \, dt
\end{align*}
by integration by parts. Hence $w$ is a solution to $\dot w = \ad_u^{\top}(w).$
\end{proof}
The equation \eqref{diffeq} is the left Euler-Arnold equation on $G$.

\subsection{Horizontal geodesics} \label{sec:GroupCritical}
In this section we define the left-invariant sub-Riemannian structure on a Lie group and study the set of critical points of the energy functional defined by a sub-Riemannian metric.

Let $\mathbf g$ be a left-invariant metric on $G$ corresponding to an inner product $\langle \cdot , \cdot \rangle$ in the Lie algebra $\mathfrak g$.
Choose a $c^\infty$-closed subspace $\Lieh$ of $\Lieg$, such that $\Lieh \oplus \Lieh^\perp = \Lieg$, where $\Lieh^\perp$ is  orthogonal to $\Lieh$ with respect to $\langle\cdot , \cdot \rangle$. Define a smooth subbundle $\calH$ of $TG$ by  left translations of $\Lieh$, or equivalently, the subbundle of all vectors $v$ with $\kappa^\ell(v) \in \Lieh$. Denote by $\mathbf h$ the restriction of the metric $\mathbf g$ to the subbundle $\calH$. We call the pair $(\calH,\mathbf h)$ the {\it left-invariant sub-Riemannian structure} on the Lie group $G$. 

 We say that a smooth curve $\gamma:I \to G$ is {\it horizontal} if $\dot \gamma(t) \in \calH_{\gamma(t)}$ for any $t \in I$. Similarly, a variation $\gamma^s$ of a curve $\gamma$ defined in (\ref{gammas}--\ref{fixpoints}) is called a horizontal variation if $\partial_t \gamma^s(t)\in\calH_{\gamma^s(t)}$ for all $s\in(-\epsilon,\epsilon)$ and $t\in I$. We want to describe the horizontal curves connecting two given fixed points which are the critical points for the energy functional, defined on the space of horizontal curves
$$E(\gamma) = \frac{1}{2}\int_0^1 \mathbf h(\dot \gamma, \dot \gamma) \, dt$$
Similarly to the Riemannian case, we introduce the following definition.

\begin{definition}\label{critical}
A horizontal curve $\gamma$ is called a sub-Riemannian geodesic if 
$$
\partial_s E(\gamma^s) |_{s=0} = 0\ \ \text{for any horizontal variation}\ \ \gamma^s.
$$
\end{definition}  
The collection of all horizontal variations of a curve $\gamma$ is denoted by $\mathcal J_{\calH}(\gamma)$. Let us introduce the notation
\begin{equation}\label{b}
\mathfrak{Var}_{\calH}(\gamma)=\{z\in \frakA\ \mid\ \text{there is}\ \gamma^s\in\mathcal J_{\calH}(\gamma)\ \text{such that}\ z=\kappa^\ell(\partial_s \gamma^s |_{s=0})\}.
\end{equation}
By the discussion in Section~\ref{sec:VarCalc}, we know that $\gamma$ is a sub-Riemannian geodesic if and only if its left logarithmic derivative $u$ satisfies $\llangle u, \tau_u z \rrangle = 0$ for any $z \in \mathfrak{Var}_{\calH}(\gamma)$. 

Define the subset $$\frakA_{\calH}=\tau^{-1}_u\pr_{\Lieh}\tau_u\frakA$$
of $\frakA$, where $\pr_{\Lieh}:\Lieg \to \Lieh$ is the orthogonal projection. Obviously, the inclusion $\mathfrak{Var}_{\calH}(\gamma)\subseteq\frakA_{\calH}$ holds.
Indeed, let $z\in\mathfrak{Var}_{\calH}(\gamma)$ and the curve 
$\gamma^s\in\mathcal J_{\calH}(\gamma)$ be such that $z=\kappa^\ell(\partial_s \gamma^s |_{s=0})$. The left logarithmic derivative $u^s$ of $\gamma^s$ is in $\mathfrak h$, and $s \mapsto u^s$ is a smooth curve from $(-\epsilon,\epsilon)$ to $C^\infty(I, \mathfrak h)$. Thus, $\tau_u z=\partial_s u^s|_{s=0}\in (\tau_u \frakA) \cap\, C^{\infty}(I,\mathfrak h)= \tau_u \frakA_{\calH}$. However, it is not necessarily true that $\mathfrak{Var}_{\calH}(\gamma) = \frakA_{\calH}.$ This phenomenon appears for both finite and infinite dimensions, and was observed long time ago, see~\cite{BH,Hamenstadt}. We will use the term {\it semi-rigid} for curves for which this property fails.

\begin{definition}\label{semi-rigid}
A horizontal curve $\gamma$ is called semi-rigid if  $\mathfrak{Var}_{\calH}$ is a proper subspace of~$\mathfrak{A}_{\calH}$. \end{definition}

Since geodesics are curves $\gamma$ with left logarithmic derivative $u\in(\tau_u\mathfrak{Var}_{\calH})^{\bot}$,  and \\
$(\tau_u\mathfrak{Var}_{\calH})^{\perp}~\supseteq~(\tau_u\mathfrak A_{\calH})^{\perp}$ by the inclusion $\tau_u\mathfrak{Var}_{\calH}\subseteq\tau_u\mathfrak A_{\calH}$, we know that $u \in (\tau_u\mathfrak A_{\calH})^{\perp}$ is a sufficient condition for $\gamma$ to be a geodesic.

\begin{definition}\label{norcr}
A horizontal curve $\gamma$ whose left logarithmic derivative $u$ belongs to $\big(\tau_u\mathfrak A_{\calH}\big)^{\perp}$ is called a {\it normal sub-Riemannian geodesic}.
\end{definition}

Notice that if a curve is not semi-rigid, then it is a sub-Riemannian geodesic if and only if it is a normal sub-Riemannian geodesic. Since the definitions of geodesics and normal geodesics use the    
 orthogonal complement to $\tau_u\mathfrak{Var}_{\calH}(\gamma)$, they essentially depend on the choice of a metric. The definition of semi-rigid curves does not depend on the metric but rather on the properties of the horizontal distribution $\calH$ itself.
We emphasize that, according to the definitions, a curve $\gamma$ can be both semi-rigid and normal geodesic at the same time. Semi-rigid curves need not be geodesics, but all geodesics which are not normal are semi-rigid. We summarize the results of the section in the following statement.

\begin{theorem} \label{theorem:LieGroups}
Let $G$ be a regular Lie group  with the Lie algebra $\Lieg$. Assume that
$\Lieg$ is equipped with an inner product $\langle \cdot , \cdot \rangle$, the adjoint map $\ad^{\top}_x$ to $\ad_x$ is well-defined, and $\Lieg = \Lieh \oplus \Lieh^\perp$, where $\Lieh^\perp$ is the orthogonal complement to $\Lieh$ with respect to $\langle \cdot , \cdot \rangle$. Let $(\calH,\mathbf h)$ be the corresponding left-invariant sub-Riemannian structure on $G$.
If a horizontal curve $\gamma$ is a geodesic, then it is either a semi-rigid curve or it is a normal geodesic. In the latter case it is a solution to the equations
$$u = \kappa^\ell(\dot \gamma),\qquad\dot u = \pr_{\Lieh} \ad_u^{\top} (u+ \lambda), \quad \dot \lambda = \pr_{\Lieh^\perp} \ad_u^{\top} (u+ \lambda),$$
for some curve $\lambda$ in $\Lieh^\perp$. 
\end{theorem}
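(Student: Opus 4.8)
The plan is to split the argument into the dichotomy and the derivation of the equations, and to obtain the multiplier $\lambda$ from the solvability of an orthogonality condition. For the dichotomy I would use the characterization recorded just before the statement: a horizontal curve $\gamma$ with left logarithmic derivative $u$ is a geodesic precisely when $\llangle u,\tau_u z\rrangle=0$ for all $z\in\mathfrak{Var}_{\calH}(\gamma)$. If $\gamma$ is not semi-rigid, then by Definition~\ref{semi-rigid} we have $\mathfrak{Var}_{\calH}(\gamma)=\frakA_{\calH}$, so the condition reads $u\in(\tau_u\frakA_{\calH})^{\perp}$, which is exactly Definition~\ref{norcr}. This proves that a geodesic is semi-rigid or normal, and it remains to show that a normal geodesic solves the displayed system.

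First I would record two facts. One: $\tau_u\frakA_{\calH}=(\tau_u\frakA)\cap C^{\infty}(I,\Lieh)$, the identity already verified when the inclusion $\mathfrak{Var}_{\calH}(\gamma)\subseteq\frakA_{\calH}$ was checked. Two: integrating by parts as in Proposition~\ref{orthog} (together with its converse) shows that $w\in(\tau_u\frakA)^{\perp}$ if and only if $w$ solves the left Euler--Arnold equation $\dot w=\ad_u^{\top}w$; as this is a linear equation, its solution space is $\mathcal{W}=\{\,t\mapsto\Ad_{\gamma(t)}^{\top}c : c\in\Lieg\,\}$, parametrized by $c=w(0)$ via $\tfrac{d}{dt}\Ad_{\gamma(t)}^{\top}=\ad_u^{\top}\Ad_{\gamma(t)}^{\top}$. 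The problem is thereby reduced to producing $\lambda\in C^{\infty}(I,\Lieh^{\perp})$ with $w:=u+\lambda\in\mathcal{W}$. Granting such $\lambda$, I would project $\dot w=\ad_u^{\top}w$ onto the fixed splitting $\Lieg=\Lieh\oplus\Lieh^{\perp}$; since $u\in\Lieh$ and $\lambda\in\Lieh^{\perp}$ force $\dot u\in\Lieh$ and $\dot\lambda\in\Lieh^{\perp}$, the two projections are exactly $\dot u=\pr_{\Lieh}\ad_u^{\top}(u+\lambda)$ and $\dot\lambda=\pr_{\Lieh^{\perp}}\ad_u^{\top}(u+\lambda)$.

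Everything then rests on the existence of $\lambda$, i.e.\ on finding a single $c\in\Lieg$ with $\pr_{\Lieh}\Ad_{\gamma(t)}^{\top}c=u(t)$ for all $t$. I would extract this from the normal condition as follows. Using the explicit formula for $\tau_u^{-1}$ from Proposition~\ref{lemma:insert0}, writing $x=\tau_u^{-1}h$ identifies $\frakA_{\calH}$ with $\ker L$, where $L\colon C^{\infty}(I,\Lieh)\to\Lieg$ is $Lh=\int_0^1\Ad_{\gamma(t)}h(t)\,dt$ (the endpoint condition $x(1)=0$ becomes $\int_0^1\Ad_{\gamma}h\,dt=0$). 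Hence $u\in(\tau_u\frakA_{\calH})^{\perp}$ becomes $\llangle u,h\rrangle=0$ for all $h\in\ker L$, that is $u\in(\ker L)^{\perp}$; and since the adjoint is $(L^{\top}c)(t)=\pr_{\Lieh}\Ad_{\gamma(t)}^{\top}c$, the desired $c$ exists as soon as $(\ker L)^{\perp}=\operatorname{range}(L^{\top})$. Equivalently I would prove the decomposition $(\tau_u\frakA_{\calH})^{\perp}=(\tau_u\frakA)^{\perp}+C^{\infty}(I,\Lieh^{\perp})$; the inclusion $\supseteq$ is the easy half (if $w=u+\lambda\in(\tau_u\frakA)^{\perp}$ with $\lambda\in\Lieh^{\perp}$, then for $x\in\frakA_{\calH}$ one has $\tau_u x\in\Lieh$, so $\llangle\lambda,\tau_u x\rrangle=0$ pointwise and $\llangle u,\tau_u x\rrangle=\llangle w,\tau_u x\rrangle=0$), and the reverse inclusion is precisely what delivers $\lambda$.

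I expect this reverse inclusion to be the main obstacle. It amounts to $L$ having closed range, a property that is automatic in finite dimensions but must be argued in the convenient/Fr\'echet setting, where taking orthogonal complements only recovers the closure of a range. I would try to settle it by exhibiting $L$ (built from the smooth flow $\Ad_{\gamma(t)}$) as a closed-range operator, or, failing an explicit argument, by phrasing the construction of $\lambda$ as an existence statement for a Lagrange multiplier attached to the horizontality constraint. This is also exactly where the gap between geodesics and normal geodesics lives, and hence where semi-rigid curves enter.
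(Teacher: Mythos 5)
Your handling of the two parts the paper actually argues is the same as the paper's: the dichotomy comes from the characterization $\llangle u,\tau_u z\rrangle=0$ for $z\in\mathfrak{Var}_{\calH}(\gamma)$ together with Definitions~\ref{semi-rigid} and~\ref{norcr} (geodesic and not semi-rigid forces $\mathfrak{Var}_{\calH}(\gamma)=\frakA_{\calH}$, hence $u\in(\tau_u\frakA_{\calH})^{\perp}$), and splitting the Euler--Arnold equation for $w=u+\lambda$ along $\Lieg=\Lieh\oplus\Lieh^{\perp}$ is exactly how the displayed system is meant to arise. The genuine gap is the step you flag but do not close: the existence of $\lambda$, i.e.\ the inclusion $(\tau_u\frakA_{\calH})^{\perp}\subseteq(\tau_u\frakA)^{\perp}+C^{\infty}(I,\Lieh^{\perp})$. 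Your reduction of this to $(\ker L)^{\perp}=\operatorname{range}(L^{\top})$ for $Lh=\int_0^1\Ad_{\gamma(t)}h(t)\,dt$ is correct, but neither of your proposed escapes settles it: closedness of the range is precisely the unresolved analytic problem (in the pre-Hilbert space of smooth curves, orthogonal complements only detect closures, and when $\Lieg$ is infinite-dimensional $\operatorname{range}(L^{\top})$ need not be closed), while ``phrasing the construction of $\lambda$ as an existence statement for a Lagrange multiplier'' is circular --- it replaces Definition~\ref{norcr} by the conclusion of the theorem. In addition, your parametrization of all solutions of $\dot w=\ad_u^{\top}w$ as $t\mapsto\Ad_{\gamma(t)}^{\top}c$ silently assumes that $\Ad_{\gamma}^{\top}$ exists and that this linear ODE has unique solutions in the convenient setting; the hypotheses of the theorem grant only the existence of $\ad_x^{\top}$. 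For calibration: the paper does not close this gap either. Theorem~\ref{theorem:LieGroups} is presented as a summary of Section~\ref{sec:GroupCritical}, Proposition~\ref{orthog} treats only the unconstrained complement $(\tau_u\frakA)^{\perp}$, and the passage from Definition~\ref{norcr} to the equations is the same formal multiplier argument, inherited from~\cite{GMV}, which is valid verbatim in finite dimensions (there $\operatorname{range}(L^{\top})$ is finite-dimensional, hence orthogonally complemented, so $(\ker L)^{\perp}=\operatorname{range}(L^{\top})$ holds), but is not justified in the convenient setting.

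One further correction: your closing claim that this obstruction is ``exactly where semi-rigid curves enter'' conflates two distinct phenomena. Semi-rigidity is the gap $\mathfrak{Var}_{\calH}(\gamma)\subsetneq\frakA_{\calH}$, i.e.\ which formal variations are realizable by horizontal deformations, and the theorem disposes of it by hypothesis. The multiplier problem is the gap between $(\tau_u\frakA_{\calH})^{\perp}$ and $(\tau_u\frakA)^{\perp}+C^{\infty}(I,\Lieh^{\perp})$, and it persists for curves that are not semi-rigid. Note also that where the paper actually needs a $\lambda$ in its applications, namely in Theorem~\ref{TheoremLeft} and Corollary~\ref{cor1}, it is produced by an entirely different mechanism --- $\Ad(K)$-invariance of the metric, which yields a \emph{constant} $\lambda$ coming from a Riemannian geodesic --- and not by solving the closed-range problem; so matching your argument against the paper, you have reproduced everything it proves and made explicit the one assertion it leaves unproven.
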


We can repeat the above statements for the right-invariant sub-Riemannian structure. The right Maurer-Cartan form satisfies the equation $d\kappa^r(v,w) = [\kappa^r(v) , \kappa^r(w)]$ that implies the new definition  $\tau_u\colon x \to \dot x - [u, x]$. The map $\tau_u$ is also invertible and the equations for the normal geodesics become
$$u = \kappa^r(\dot \gamma),\qquad
\dot u = -\pr_{\Lieh} \ad_u^{\top} (u+ \lambda), \quad \dot \lambda = -\pr_{\Lieh^\perp} \ad_u^{\top} (u+ \lambda).
$$

%%%%%%%%%%%%%%%%%%%%%%%%
\subsection{Semi-rigid curves on regular Lie groups}
In Section \ref{sec:GroupCritical} we defined  semi-rigid curves in terms of horizontal variations of a curve $\gamma$. But we could have also described them purely in terms of its left logarithmic derivative $u$.

Let $\Lieg = \Lieh \oplus \Liek$ be a splitting of $\Lieg$. The subspace $\Liek$ here is the topological complement to  $\Lieh$ in $\Lieg$. We do not need to introduce a metric, since semi-rigid curves do not depend on it. Define a subbundle $\calH$ of $TG$  generated by left translations of $\Lieh$. Then a results found in \cite[Lemma 8.8]{Milnor} allows us to describe variations only in terms of the Lie algebra.

Denote by $\pr_{\Liek}\colon\Lieg\to\Liek$ the projection with the kernel  $\Lieh$. 
\begin{proposition}[\cite{GMV}]
Let  $\gamma\colon I\to G$ be a horizontal curve with the left logarithmic derivative $u\colon I\to\Lieg$. A curve $\gamma$ is semi-rigid, if and only if, there is a curve $z \in \frakA$ with
\begin{equation} \label{eq:zHvar} \pr_{\Liek} \tau_u z = 0,\end{equation}
such that the problem
\begin{equation} \label{eq:Problem}
\begin{cases}
\partial_s u^s = \tau_{(u^s)} z^s, 
\\
u^s(t) \in \Lieh, \qquad\text{for}\quad (t,s)\in I \times (-\epsilon, \epsilon),
\\
z^s(t) \in \Lieg, \qquad\text{for}\quad (t,s)\in I \times (-\epsilon, \epsilon),
\\
u^0(t) = u(t), \quad z^0(t) = z(t), \qquad\text{for}\quad t\in I,
\\
z^s(0) = z^s(1) = 0, \qquad\text{for}\quad s\in (-\epsilon, \epsilon),
\end{cases} 
\end{equation}
has no solution.
\end{proposition}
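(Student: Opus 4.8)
The plan is to reduce the whole statement to a single equivalence: a curve $z \in \frakA$ lies in $\mathfrak{Var}_{\calH}(\gamma)$ if and only if the problem \eqref{eq:Problem}, taken with that $z$ as the initial datum $z^0$, admits a solution. Once this is in hand, the proposition is immediate. Indeed, linearizing the horizontality constraint gives $\mathfrak{Var}_{\calH}(\gamma) \subseteq \frakA_{\calH}$, and the curves $z \in \frakA$ representing elements of $\frakA_{\calH}$ are exactly those with $\pr_{\Liek}\tau_u z = 0$ (equivalently $\tau_u z \in \Lieh$). Hence $\gamma$ is semi-rigid, i.e.\ $\mathfrak{Var}_{\calH}(\gamma)$ is a proper subspace of $\frakA_{\calH}$, precisely when there is some $z \in \frakA$ with $\pr_{\Liek}\tau_u z = 0$ that fails to be realizable, that is, for which \eqref{eq:Problem} has no solution.

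For the forward implication I would begin with a horizontal variation $\gamma^s \in \calJ_{\calH}(\gamma)$ satisfying $\kappa^\ell(\partial_s\gamma^s|_{s=0}) = z$, and set $u^s(t) = \kappa^\ell(\partial_t \gamma^s(t))$ and $z^s(t) = \kappa^\ell(\partial_s \gamma^s(t))$. Applying Cartan's structural equation to $F(t,s)=\gamma^s(t)$, exactly as in the derivation of \eqref{eq:partials} but without restricting to $s=0$, produces $\partial_s u^s = \tau_{u^s} z^s$. Horizontality of each $\gamma^s$ gives $u^s(t)\in\Lieh$; the fixed-endpoint conditions \eqref{fixpoints} give $z^s(0)=z^s(1)=0$; and by construction $u^0=u$, $z^0=z$. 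Thus $(u^s,z^s)$ solves \eqref{eq:Problem}. Differentiating $u^s\in\Lieh$ at $s=0$ also shows $\tau_u z = \partial_s u^s|_{s=0}\in\Lieh$, which is the inclusion $\mathfrak{Var}_{\calH}(\gamma)\subseteq\frakA_{\calH}$ used above.

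The converse is where the real work lies, and it is where regularity of $G$ enters. Given a solution $(u^s, z^s)$ of \eqref{eq:Problem}, I would integrate the smooth family $[s\mapsto u^s]\in C^\infty(I,\Lieh)$ to a smooth family $\gamma^s$ with $\kappa^\ell(\partial_t\gamma^s)=u^s$ and $\gamma^s(0)=\gamma(0)$, using regularity together with \cite[Lemma 8.8]{Milnor} to secure smooth joint dependence on $(t,s)$. Each $\gamma^s$ is then horizontal and $\gamma^0=\gamma$. To verify that $\gamma^s$ is an honest variation I would set $\tilde z^s = \kappa^\ell(\partial_s\gamma^s)$; the same structural equation yields $\partial_s u^s = \tau_{u^s}\tilde z^s$, so $\tau_{u^s}(\tilde z^s - z^s)=0$, while $\gamma^s(0)=\gamma(0)$ forces $\tilde z^s(0)=0=z^s(0)$. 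By the uniqueness in Proposition~\ref{lemma:insert0} this gives $\tilde z^s = z^s$; in particular $\partial_s\gamma^s(1)$ corresponds to $\tilde z^s(1)=z^s(1)=0$, so $\gamma^s(1)$ is constant in $s$ and equals $\gamma(1)$. Hence $\gamma^s$ is a horizontal variation with $\kappa^\ell(\partial_s\gamma^s|_{s=0})=\tilde z^0 = z$, i.e.\ $z\in\mathfrak{Var}_{\calH}(\gamma)$.

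I expect the main obstacle to be exactly this converse step. One must produce the family $\gamma^s$ with smooth dependence on the parameter $s$, not merely for each fixed $s$, which is the content of regularity in the convenient-calculus sense, and then argue that the endpoint $\gamma^s(1)$ does not move. The latter is the delicate point, since it is not imposed directly but must be extracted from $z^s(1)=0$ through the invertibility of $\tau_{u^s}$ and the uniqueness assertion of Proposition~\ref{lemma:insert0}. Everything else is a routine translation between the group picture and the Lie-algebra picture via the Maurer-Cartan form.
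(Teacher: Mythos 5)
Correct, and essentially the paper's own route: the paper delegates the proof of this proposition to \cite{GMV}, but the tools it points to---regularity of $G$, \cite[Lemma 8.8]{Milnor} for integrating a smooth family $u^s$ in $C^\infty(I,\Lieh)$ to a jointly smooth family $\gamma^s$ in $G$, and the unique solvability of $\tau_u x = y$, $x(0)=0$ from Proposition~\ref{lemma:insert0}---are exactly the ingredients you deploy, in the same roles, together with the Cartan structural equation giving $\partial_s u^s = \tau_{u^s} z^s$ at every $s$. Your handling of the one genuinely delicate point, recovering the fixed endpoint from the Lie-algebra data (from $\tau_{u^s}(\tilde z^s - z^s)=0$ and $(\tilde z^s - z^s)(0)=0$ conclude $\tilde z^s = z^s$, hence $\kappa^\ell(\partial_s\gamma^s(1)) = z^s(1) = 0$ and $\gamma^s(1)\equiv\gamma(1)$), is precisely the intended argument.
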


%%%%%%%%%%%%%%%%%%%%%%%%%%%%%%%%%%%%%%%%%%%%%%%%
\subsection{Geodesics with respect to invariant metrics} 

Now we consider a special situation when a Lie group $G$ carries a metric  invariant under the action of some special subgroup $K$ of $G$. Namely, let $G$ be a finite- or infinite-dimensional regular Lie group and $K$ be a connected subgroup. Denote by $\Lieg$ and $\Liek$ their respective Lie algebras. Let $\langle \ , \, \rangle$ be an inner product in $\Lieg$, with respect of which $\ad_x^{\top}$ exists. Furthermore, we assume that $\Lieh = \Liek^\perp$ and $\Lieg = \Lieh \oplus \Liek$. Define the horizontal distribution $\calH$ by left translations of $\Lieh$. Let $\mathbf g$ be a left-invariant Riemannian metric on $G$ obtained from $\langle \ , \, \rangle$ and $\mathbf h = \mathbf g|_{\calH}$.  If the metric $\mathbf g$ is invariant under the action of $K$, then it gives us an opportunity to construct normal critical curves from  Riemannian geodesics. 

\begin{theorem}[\cite{GMV}] \label{TheoremLeft}
The following statements hold.
\begin{itemize}
\item[(a)] If $\langle \ , \, \rangle$ is $\ad(\Liek)$ invariant and if $\gamma_R\colon[0,1]\to G$ is a Riemannian geodesic with respect to $\mathbf g$, then
$$\lambda (t)= \pr_{\Liek} \kappa^\ell(\dot \gamma_R(t)),\qquad t\in[0,1]$$
is constant. Here $\pr_{\Liek}\colon\Lieg\to\Liek$ is the orthogonal projection with respect to $\langle \ , \, \rangle$.
\item[(b)] If $\langle \ , \, \rangle$ is $\Ad(K)$ invariant, then a horizontal curve $\gamma_{sR}\colon[0,1]\to G$ is a normal geodesic, if and only if, it is of the form
\begin{equation} \label{eq:sRKinvariant} \nonumber \gamma_{sR}(t) = \gamma_R(t) \cdot \exp_G(-\lambda t), \quad t\in I,
\end{equation}
where $\gamma_R\colon I\to G$ is a Riemannian geodesic with respect to $\mathbf g$, and $\lambda = \pr_{\Liek} \kappa^\ell(\dot \gamma_R(0))$.
\end{itemize}
\end{theorem}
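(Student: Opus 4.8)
The plan is to reduce everything to the explicit geodesic equations already established: the Euler--Arnold equation $\dot u = \ad_u^\top u$ for Riemannian geodesics from Proposition~\ref{orthog}, and the normal geodesic system from Theorem~\ref{theorem:LieGroups}, which here reads $\dot u = \pr_{\Lieh}\ad_u^\top(u+\lambda)$ and $\dot\lambda = \pr_{\Liek}\ad_u^\top(u+\lambda)$ because $\Lieh^\perp=\Liek$. Throughout I would repeatedly use the two consequences of invariance: $\ad(\Liek)$-invariance makes each $\ad_\xi$, $\xi\in\Liek$, skew-symmetric, i.e. $\ad_\xi^\top=-\ad_\xi$; while $\Ad(K)$-invariance gives the conjugation rule $\ad_{\Ad_k x}^\top=\Ad_k\,\ad_x^\top\,\Ad_{k}^{-1}$ for $k\in K$, together with the fact that $\Ad_k$ preserves both $\Liek$ (as $K$ is a subgroup) and $\Lieh=\Liek^\perp$ (by invariance of the inner product). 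Note that $\Ad(K)$-invariance implies $\ad(\Liek)$-invariance by differentiation, so part (a) applies whenever part (b) does.

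For part (a) I would set $\lambda(t)=\pr_{\Liek}u(t)$ with $u=\kappa^\ell(\dot\gamma_R)$ and compute $\dot\lambda=\pr_{\Liek}\ad_u^\top u$. Pairing with an arbitrary $\xi\in\Liek$ gives $\langle\ad_u^\top u,\xi\rangle=\langle u,[u,\xi]\rangle=-\langle u,\ad_\xi u\rangle$, and skew-symmetry of $\ad_\xi$ forces $\langle u,\ad_\xi u\rangle=0$. Hence $\dot\lambda=0$, so $\lambda$ is constant.

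For part (b) I would first show that the multiplier $\lambda$ of any normal geodesic is constant: the same pairing computation, now applied to $\dot\lambda=\pr_{\Liek}\ad_u^\top(u+\lambda)$, gives $\langle\ad_u^\top(u+\lambda),\xi\rangle=\langle u+\lambda,[u,\xi]\rangle$, and both $\langle u,[u,\xi]\rangle$ and $\langle\lambda,[u,\xi]\rangle$ vanish by skew-symmetry and because $[\xi,\lambda]\in\Liek$ is orthogonal to $u\in\Lieh$. Consequently $\ad_u^\top(u+\lambda)$ already lies in $\Lieh$, so in fact $\dot u=\ad_u^\top(u+\lambda)$ without the projection. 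With $\lambda$ constant I would then define $\gamma_R(t)=\gamma_{sR}(t)\exp_G(\lambda t)$ and compute its left logarithmic derivative by the product rule as $\kappa^\ell(\dot\gamma_R)=\Ad_{\exp_G(-\lambda t)}u+\lambda=:v+\lambda=:u_R$. Differentiating $v=\Ad_{\exp_G(-\lambda t)}u$ produces the term $-[\lambda,v]$ from the one-parameter subgroup and the term $\Ad_{\exp_G(-\lambda t)}\dot u$ from $u$; feeding in $\dot u=\ad_u^\top(u+\lambda)$ and applying the conjugation rule together with $\Ad_{\exp_G(\pm\lambda t)}\lambda=\lambda$ converts this into $\ad_v^\top u_R-[\lambda,v]$, which is exactly $\ad_{u_R}^\top u_R$ after expanding $\ad_{u_R}^\top=\ad_v^\top-\ad_\lambda$ by skew-symmetry. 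Thus $\gamma_R$ solves Euler--Arnold and is a Riemannian geodesic, with $\pr_{\Liek}u_R(0)=\lambda$, giving $\gamma_{sR}=\gamma_R\exp_G(-\lambda t)$. The converse direction is the same calculation read backwards: starting from a Riemannian geodesic I would set $\gamma_{sR}=\gamma_R\exp_G(-\lambda t)$, check horizontality by writing $\kappa^\ell(\dot\gamma_{sR})=\Ad_{\exp_G(\lambda t)}\pr_{\Lieh}u_R\in\Lieh$ (using that $\Ad$ preserves $\Lieh$ and fixes $\lambda$), and verify the two normal-geodesic equations by the identical manipulation with $\lambda$ constant.

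The main obstacle is the bookkeeping in the middle of part (b): correctly differentiating $t\mapsto\Ad_{\exp_G(\lambda t)}u(t)$ in the infinite-dimensional regular setting and then transporting the transpose operator $\ad^\top$ through $\Ad$. The conjugation identity $\ad_{\Ad_k x}^\top=\Ad_k\,\ad_x^\top\,\Ad_k^{-1}$ is what makes the whole argument algebraic rather than analytic, and it relies essentially on $\Ad(K)$-invariance rather than merely $\ad(\Liek)$-invariance, which is precisely why part (b) needs the stronger hypothesis than part (a). A secondary point to handle carefully is the observation that $\dot u=\ad_u^\top(u+\lambda)$ holds without the projection $\pr_{\Lieh}$; this is what lets the $\pr_{\Lieh}$ in the normal-geodesic equation and the full $\ad_{u_R}^\top$ in Euler--Arnold match up, and it is also what shows $\lambda$ must be constant, removing any need for an ODE-uniqueness argument in the infinite-dimensional setting.
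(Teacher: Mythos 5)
Your proof is correct, and since the paper itself gives no proof of this theorem --- it is quoted from \cite{GMV} --- the only comparison available is with the argument its own framework sets up, which is exactly the one you follow: part (a) from the Euler--Arnold equation of Proposition~\ref{orthog} plus skew-symmetry of $\ad_\xi$, $\xi\in\Liek$, and part (b) via the translation $\gamma_R(t)=\gamma_{sR}(t)\exp_G(\lambda t)$ combined with the conjugation identity $\ad_{\Ad_k x}^{\top}=\Ad_k\,\ad_x^{\top}\,\Ad_{k^{-1}}$ and $\Ad_{\exp_G(\lambda t)}\lambda=\lambda$. The only caveats, both minor and consistent with the paper's own usage, are that you treat the normal-geodesic system of Theorem~\ref{theorem:LieGroups} as an equivalence (the paper states one implication there but itself invokes it as an ``if and only if'' in Section~3), and that your intermediate observation that $\pr_{\Lieh}$ can be dropped from $\dot u=\pr_{\Lieh}\ad_u^{\top}(u+\lambda)$ is precisely the content of Corollary~\ref{cor1}.
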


 We emphasize the following fact. 
 \begin{corollary}\label{cor1}
The left logarithmic derivative $u_{sR}$ of a curve $\gamma_{sR}$ satisfies the equation $\dot u_{sR} = \ad_{u_{sR}}^{\top}(u_{sR} + \lambda)$ with a constant $\lambda$.
 \end{corollary}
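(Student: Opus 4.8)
The plan is to derive Corollary~\ref{cor1} directly from Theorem~\ref{TheoremLeft} by computing the left logarithmic derivative of the product curve $\gamma_{sR}(t) = \gamma_R(t)\cdot\exp_G(-\lambda t)$ and then invoking the left Euler--Arnold equation satisfied by the Riemannian geodesic $\gamma_R$. First I would recall from Proposition~\ref{orthog} and the remark following it that a Riemannian geodesic $\gamma_R$ with left logarithmic derivative $u_R = \kappa^\ell(\dot\gamma_R)$ satisfies $\dot u_R = \ad_{u_R}^{\top}(u_R)$, since the critical condition $\llangle u_R, \tau_{u_R} z\rrangle = 0$ for all $z\in\frakA$ means precisely that $u_R \in (\tau_{u_R}\frakA)^{\perp}$.

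Next I would compute $u_{sR} = \kappa^\ell(\dot\gamma_{sR})$ using the left-invariance of the Maurer--Cartan form. Writing $\gamma_{sR} = \ell_{\gamma_R}\circ c$ where $c(t) = \exp_G(-\lambda t)$ has the constant left logarithmic derivative $-\lambda$, the standard product rule for left logarithmic derivatives gives
\begin{equation}\label{eq:prodrule}
u_{sR}(t) = \Ad_{c(t)^{-1}} u_R(t) + \kappa^\ell(\dot c(t)) = \Ad_{\exp_G(\lambda t)} u_R(t) - \lambda.
\end{equation}
Here is where the hypotheses of Theorem~\ref{TheoremLeft}(a) and the $\Ad(K)$-invariance enter crucially: by part (a), $\lambda = \pr_{\Liek}\kappa^\ell(\dot\gamma_R(t))$ is constant along $\gamma_R$, and $\Ad(K)$-invariance of $\langle\cdot,\cdot\rangle$ means $\Ad_{\exp_G(\lambda t)}$ acts as an orthogonal transformation commuting appropriately with $\ad^{\top}$. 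I would use these to simplify \eqref{eq:prodrule} and show that differentiating it produces the desired form.

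The key computation is to differentiate $u_{sR}$ in $t$. Using $\tfrac{d}{dt}\Ad_{\exp_G(\lambda t)} = \Ad_{\exp_G(\lambda t)}\ad_{\lambda}$ together with the Euler--Arnold equation $\dot u_R = \ad_{u_R}^{\top}(u_R)$, the $\Ad(K)$-invariance gives the intertwining relation $\Ad_{k}\ad_x^{\top}\Ad_{k^{-1}} = \ad_{\Ad_k x}^{\top}$ for $k\in K$, which lets me rewrite all terms in the form $\ad_{u_{sR}}^{\top}(\cdot)$. After substituting $u_{sR} + \lambda = \Ad_{\exp_G(\lambda t)} u_R$ from \eqref{eq:prodrule} and collecting terms, I expect the result to collapse precisely to $\dot u_{sR} = \ad_{u_{sR}}^{\top}(u_{sR} + \lambda)$, which also matches the normal geodesic equations of Theorem~\ref{theorem:LieGroups} once one sets $\dot\lambda = 0$ (consistent with $\lambda$ being constant).

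The main obstacle I anticipate is the careful bookkeeping of the adjoint-action identities in the infinite-dimensional convenient setting, specifically verifying that the intertwining formula $\Ad_k\ad_x^{\top}\Ad_{k^{-1}} = \ad_{\Ad_k x}^{\top}$ holds as a consequence of $\Ad(K)$-invariance rather than merely $\ad(\Liek)$-invariance, and that differentiation under $\Ad_{\exp_G(\lambda t)}$ is justified. Since Theorem~\ref{TheoremLeft} already packages the hard analytic content (existence of the decomposition $\gamma_{sR} = \gamma_R\cdot\exp_G(-\lambda t)$ and constancy of $\lambda$), the corollary should reduce to this algebraic manipulation; I would present it as a short direct verification, emphasizing that $\dot\lambda = 0$ reconciles the two equations $\dot u = \pr_{\Lieh}\ad_u^{\top}(u+\lambda)$ and $\dot\lambda = \pr_{\Liek}\ad_u^{\top}(u+\lambda)$ of Theorem~\ref{theorem:LieGroups} into the single equation $\dot u_{sR} = \ad_{u_{sR}}^{\top}(u_{sR}+\lambda)$.
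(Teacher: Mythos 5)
Your proof is correct; note that the paper itself offers no argument for Corollary~\ref{cor1} (it is stated as a fact to ``emphasize'' immediately after Theorem~\ref{TheoremLeft}), so your computation is a legitimate way to supply the missing details rather than a deviation from a written proof. Two remarks on the comparison. First, the final ``collapse'' you anticipate needs, in addition to the intertwining relation $\Ad_k\ad_x^\top\Ad_{k^{-1}}=\ad_{\Ad_kx}^\top$, the facts $\ad_\lambda^\top=-\ad_\lambda$ (differentiate the $\Ad(K)$-invariance along the connected subgroup $K$) and $\Ad_{\exp_G(\lambda t)}\lambda=\lambda$; with $k(t)=\exp_G(\lambda t)$ one then gets
\begin{align*}
\dot u_{sR} &= \Ad_k\big(\ad_\lambda u_R+\ad_{u_R}^\top u_R\big),\\
\ad_{u_{sR}}^\top(u_{sR}+\lambda) &= \ad_{\Ad_ku_R}^\top(\Ad_ku_R)-\ad_\lambda^\top(\Ad_ku_R)
= \Ad_k\ad_{u_R}^\top u_R+\Ad_k\ad_\lambda u_R,
\end{align*}
so the two sides indeed coincide. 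Second, there is a shorter, projection-based route that the placement of the corollary suggests: by Theorem~\ref{TheoremLeft}(b) the curve $\gamma_{sR}$ is a normal geodesic, so Theorem~\ref{theorem:LieGroups} gives $\dot u_{sR}=\pr_{\Lieh}\ad_{u_{sR}}^\top(u_{sR}+\lambda)$ and $\dot\lambda=\pr_{\Lieh^\perp}\ad_{u_{sR}}^\top(u_{sR}+\lambda)$; for every $y\in\Liek$, invariance yields $\langle \ad_{u_{sR}}^\top(u_{sR}+\lambda),y\rangle=\langle \ad_y(u_{sR}+\lambda),u_{sR}\rangle=\langle[y,\lambda],u_{sR}\rangle=0$, since $[\Liek,\Liek]\subseteq\Liek=\Lieh^\perp$ while $u_{sR}(t)\in\Lieh$; hence $\dot\lambda=0$ and the two projected equations merge into the single unprojected equation of the corollary. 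Your route has the advantage of expressing the sub-Riemannian solution explicitly through the Riemannian geodesic $\gamma_R$; the alternative avoids the product-rule and $\Ad$ bookkeeping entirely and uses only $\ad(\Liek)$-invariance at this step.
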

Notice that the metric $\mathbf g$ does not need to be positively definite on both $\Lieh$ and $\Liek$, it can be positive definite on $\Lieh$ and a pseudometric on $\Liek$ at the same time. Only the transversality of $\Lieh$ and $\Liek$ has to be preserved.
Moreover, Theorem~\ref{TheoremLeft} can be generalized to principal bundles in the case of finite-dimensional manifolds, see~\cite[Theorem 11.8]{Mon}.

%%%%%%%%%%%%%%%%%%%%%%%%
\subsection{Controllability on Lie groups}
We defined critical points of energy functional in the set of horizontal curves connecting
 two points $a_1, a_2$. Now we study the problem of controllability,~i.~e., we check if the set of such curves is non-empty.
 A sub-Riemannian structure $(\calH, \mathbf h)$ is called controllable if any two points can be connected by a horizontal curve. The main tool  to prove this property in finite dimensions is the Rashevski{\u\i}-Chow theorem \cite{Chow,Rashevsky}. There are almost no general results on connectivity by horizontal curves in infinite dimensions.

We present here a controllability result for a special class of infinite-dimensional Lie groups. Assume that a horizontal subbundle $\calH$ is invariant under the action of a subgroup $K$ of a given group~$G$. Then, if the tangent bundle $TK$ is transversal to $\calH$, the problem of controllability reduces to the problem whether elements of $K$ can be reached from the unity by a horizontal curve. One of particularly interesting cases is when the subgroup $K$ is finite-dimensional.

\begin{proposition}[\cite{GMV}] \label{groupcontrollability}
Let $G$ be a Lie group with the Lie algebra $\Lieg$, and let a left- (or right-) invariant horizontal subbundle $\calH$ be obtained by left (or right) translations of a subspace $\Lieh \subseteq \Lieg$. Assume that there is a sub-group $K$ of $G$ with the Lie algebra $\Liek$, and such that $\Lieg = \Liep \oplus \Liek$ for some $\Liep \subseteq \Lieh$. Suppose also that $\Lieh$ is $\Ad(K)$-invariant. Then any pair of elements in $G$ can be connected by a smooth horizontal curve, if and only if, for every $a \in K$ there is a horizontal smooth curve connecting $\mathbf 1\in K$ and $a$.
\end{proposition}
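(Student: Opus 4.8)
The plan is to reduce everything to the statement that the set $\calA$ of all endpoints $\gamma(1)$ of smooth horizontal curves $\gamma$ with $\gamma(0)=\mathbf 1$ is all of $G$, and then to exploit the $\Ad(K)$-invariance of $\Lieh$ to ``gauge away'' the vertical component of an arbitrary curve's velocity. First I would record that, since $\calH$ is left-invariant, each left translation $\ell_a$ maps horizontal curves to horizontal curves; reversing a horizontal curve and left-translating shows $\calA$ is closed under inversion, while concatenating $\alpha$ with $a\cdot\beta$ (for $\alpha,\beta$ horizontal from $\mathbf 1$ to $a,b$) shows closure under products, so $\calA$ is a subgroup of $G$. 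The same left-invariance reduces the two-point problem to the one-point problem: a horizontal curve joins $a_0$ to $a_1$ if and only if one joins $\mathbf 1$ to $a_0^{-1}a_1$, hence every pair is connectable if and only if $\calA=G$. With this reformulation the ``only if'' implication is immediate: if $\calA=G$ then in particular $K\subseteq G=\calA$, i.e.\ every $a\in K$ is reached from $\mathbf 1$ by a horizontal curve.

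For the converse I would assume $K\subseteq\calA$ and prove $\calA=G$ by showing every $g\in G$ is an endpoint. Fix a smooth curve $c\colon I\to G$ with $c(0)=\mathbf 1$, $c(1)=g$ (available once $G$ is connected, since connected manifolds modelled on convenient spaces are smoothly path-connected), and write its left logarithmic derivative as $u=\kappa^\ell(\dot c)=p+q$ with $p=\pr_{\Liep}u\in\Liep\subseteq\Lieh$ and $q=\pr_{\Liek}u\in\Liek$. The idea is to correct $c$ by a curve in $K$ acting on the right so as to cancel the vertical part $q$. Using regularity of $K$, let $k\colon I\to K$ be the unique curve with right logarithmic derivative $\kappa^r(\dot k)=-q$ and $k(0)=\mathbf 1$; here the right-hand side is $\Liek$-valued, so the solution indeed stays in $K$.

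The heart of the argument is the Leibniz rule for the left logarithmic derivative, $\kappa^\ell(\dot{(ck)})=\Ad_{k^{-1}}\kappa^\ell(\dot c)+\kappa^\ell(\dot k)$, together with $\kappa^\ell(\dot k)=\Ad_{k^{-1}}\kappa^r(\dot k)=-\Ad_{k^{-1}}q$. Substituting gives $\kappa^\ell(\dot{(ck)})=\Ad_{k^{-1}}(u-q)=\Ad_{k^{-1}}p$, and since $p\in\Lieh$ and $\Lieh$ is $\Ad(K)$-invariant, this lies in $\Lieh$: the curve $\gamma:=c\,k$ is horizontal. As $\gamma(0)=\mathbf 1$ and $\gamma(1)=g\,k(1)$ with $k(1)\in K$, we obtain $g\,k(1)\in\calA$; since $k(1)^{-1}\in K\subseteq\calA$ and $\calA$ is a subgroup, $g=(g\,k(1))\,k(1)^{-1}\in\calA$. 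As $g$ was arbitrary, $\calA=G$, which is the claim. The right-invariant case is entirely symmetric, interchanging the roles of $\kappa^\ell$ and $\kappa^r$.

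I expect the main obstacle to be the construction of the correcting curve $k$ and the verification that $c\,k$ is horizontal. A naive attempt writes the equation for $k$ as $\kappa^\ell(\dot k)=-\Ad_{k^{-1}}q$, whose right-hand side depends on the unknown $k$; in infinite dimensions one cannot simply invoke ODE existence, and the inverse-function-theorem route (covering a neighbourhood of $\mathbf 1$ by products of $\exp_G(\Liep)$ with $K$) is unavailable for Fr\'echet groups. The simplification I would emphasise is that this state-dependent equation is exactly the prescription of the right logarithmic derivative $\kappa^r(\dot k)=-q$, which is solvable directly by regularity of $K$ with no fixed-point argument; and it is precisely the $\Ad(K)$-invariance of $\Lieh$ that renders $\gamma=c\,k$ horizontal, so this hypothesis enters in an essential way.
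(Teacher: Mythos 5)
Your proposal is correct, and it takes essentially the route of the cited source \cite{GMV} (the present paper only quotes this proposition and contains no proof of its own): reduce to showing that the reachable set from $\mathbf 1$ is a subgroup equal to all of $G$, then render an arbitrary smooth curve $c$ horizontal by right-multiplying with the curve $k$ in $K$ solving $\kappa^r(\dot k)=-\pr_{\Liek}\kappa^\ell(\dot c)$, $k(0)=\mathbf 1$, horizontality of $ck$ being precisely the $\Ad(K)$-invariance of $\Lieh$. Your key device --- prescribing the correcting curve by its \emph{right} logarithmic derivative so that regularity of $K$ applies directly, instead of the state-dependent equation $\kappa^\ell(\dot k)=-\Ad_{k^{-1}}\pr_{\Liek}\kappa^\ell(\dot c)$ --- is exactly the right one, and the implicit points you flag (connectedness of $G$, regularity of $K$, smooth reparametrization before concatenation) are all consistent with the paper's standing assumptions on regular Lie groups.
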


%%%%%%%%%%%%%%%%%%%%%%%%%%%%%%%%%%%%%%%%%%%%%%
%%%%%%%%%%%% New corrections start here %%%%%%%%%%%%%%%%%%%%
%%%%%%%%%%%%%%%%%%%%%%%%%%%%%%%%%%%%%%%%%%%%%%
\section{The group of diffeomorphisms of $S^1$} \label{sec:Diff}

Let $\Diff S^1$ denote the group of  orientation preserving diffeomorphisms of the unit circle $S^1$, which is the component of the identity of the group of all diffeomorphisms of $S^1$.  
See  \cite{Milnor} for a description of the manifold structure on the diffeomorphism groups. 

We  denote by $\id$ the identity in $\Diff S^1$. Let us identify $T\Diff S^1$ and $\Diff S^1 \times \Vect S^1$    by associating the element $(\gamma(0), \dot \gamma(0) \partial_{\theta})$ to the equivalence class of curves $[t  \mapsto \gamma(t)] \in T_{\gamma(0)} \Diff S^1$ passing through $\gamma(0)$.
The left and right actions are described by
\begin{equation} \label{leftrightDiff} d\ell_{\varphi} (\phi, x \partial_\theta) = \big(\varphi \circ \phi, (\varphi' x) \partial_\theta \big), \qquad
dr_{\varphi} (\phi, x \partial_\theta) = \big(\phi \circ \varphi, (x \circ \varphi) \partial_\theta\big),\end{equation}
where $\phi, \varphi \in \Diff S^1, x \in C^\infty(S^1)$.  Notice that  \eqref{leftrightDiff} implies $\Ad_{\varphi} x \partial_{\theta} = \varphi' x(\varphi^{-1}) \partial_\theta.$

\subsection{Relationship to univalent functions}
Consider the space $\calA_0$ of all holomorphic functions
$$F: \unitD \to \comp, \qquad F(0) = 0,\quad\text{with}\quad \unitD=\{z:\,\,|z|<1\},$$
such that the extension of $F$ to the boundary $S^1$ is $C^\infty(\hat\unitD, \comp)$. Here, $\hat{\unitD}$ denotes the closure of $\unitD$. 
The class $\mathcal A_0$  is a complex Fr\'echet vector space where the topology is defined by the seminorms
$$
\|F\|_m=\sup \{ |F^{(m)}(z)| \ \mid\ z\in\hat{\mathbb D}\},
$$ which is equivalent to the uniform convergence of all derivatives $F^{(m)}$ in $\hat{\mathbb D}$. The local coordinates can be defined by
 the embedding of $\calA_0$ to $\mathbb C^{\mathbb N}$ given by
$$F = \sum_{n=1}^\infty a_n z^n \mapsto (a_1, a_2, \dots ).$$
Let $\calF_0$ be a subclass of $\calA_0$ consisting of all univalent  functions $f\in\calA_0$, normalized by  $f'(0) =1$. The de Branges theorem~\cite{deBranges} yields that $\calF_0$ is contained in the bounded subset
$$1 \times \prod_{n=2}^\infty n \unitD \subseteq \comp^{\mathbb{N}}.$$

Let $\unitD_-$ be the exterior of the unit disk $\mathbb D=\mathbb D_+$. For any $f \in \calF_0$, we define a {\it matching function}  $g:\unitD_- \to \comp$, such that the image of $\mathbb D_-$ under $g$ is exactly the exterior of $f(\mathbb D_+)$, and let $g$ satisfy the normalization $g(\infty) = \infty$. Note that such $g$ exists by the Riemann mapping theorem. Since both functions $f$ and $g$ have a common boundary, $g$ also has a smooth extension to the closure $\hat{\mathbb D}_-$ of $\mathbb D_-$. Therefore, the images $g(S^1)$ and $f(S^1)$ are defined uniquely and represent the same smooth contour in $\comp$. If $g$ and $\widetilde g$ are two matching functions to $f$, then  they are related by a rotation
$$\widetilde g(\zeta) = g(\zeta w),\quad \zeta \in \unitD_-, \quad |w|=1.$$
For an arbitrarily matching function $g$ to $f \in \calF_0$ the diffeomorphism $\phi\in\Diff S^1$, given by
\begin{equation} \label{corsp} 
e^{i\phi(\theta)} = (f^{-1} \circ g)(e^{i\theta}),
\end{equation}
is uniquely defined by $f$ up to the right superposition with a rotation. Let $\Rot$ denote the sub-group of $\Diff S^1$ consisting of rotations. As a Lie group, it is isomorphic to $U(1)$. Its Lie algebra $\rot$ can be identified with the constant vector fields on $S^1$.
The relation \eqref{corsp} gives a holomorphic bijection
\begin{equation} \label{corsp1} \Diff S^1/\Rot   \cong \calF_0,\end{equation}
after complexification of $\Diff S^1/\Rot$, see  \cite{AM, Kirillov0,Kirillov1}. The induced transitive left action of $\Diff S^1$ on $\calF_0$ is holomorphic.

\subsection{Sub-Riemannian structures corresponing to invariant K\"ahler metrics on the univalent functions} \label{sec:sRstructures}
All pseudo-Hermitian metrics on $\calF_0$ which are invariant under the action of $\Diff S^1$ belong to a two-parameter family of metrics $\mathbf b_{\alpha\beta}$, see~\cite{Kirillov1, Kirillov4, Unirreps}. Let us first describe these metrics at $\id_{\unitD} \in \calF_0$.
Any smooth curve $f_t$ in $\calF_0$ with $f_0 = \id_{\unitD}$ is written as
$$f_t(z) = z + t z F(z) + o(t), \qquad F \in \calA_0.$$
Hence, we can identify $T_{\id_{\unitD}} \calF_0$ with $\calA_0$ by relating $[t \mapsto f_t]$ to $F$. With this identification, $\mathbf b_{\alpha\beta}$ is given by
\begin{align}\label{metric}
\mathbf b_{\alpha\beta}\big\vert_{\id_{\unitD}}(F_1, F_2) & = \frac{2}{\pi} \iint_{\unitD} \Big( \alpha F_1' \overline{F}_2' + \beta (z F_1')' \overline{(z F_2')'} \Big) d\sigma(z),\nonumber \\
& = 2\sum_{n=1}^\infty (\alpha n + \beta n^3) a_n \overline{b}_n,
\end{align}
where $d\sigma(z)$ is the area element and
$F_1(z) = \sum_{n=1}^\infty a_n z^n$, $F_2(z) = \sum_{n=1}^\infty b_n z^n$.
This description determines $\mathbf b_{\alpha\beta}$ uniquely, since the metric at any other point $f$ of $\calF_0$ can be obtained by using the left action of $\Diff S^1$, because the metric is invariant. However,  given a univalent function $f$, there is no general method to obtain a matching function $g$, so it is difficult realize the left action of $\Diff S^1$ explicitly (see \cite{GrGumVas} for some concrete examples where matching functions are found).
 
If $\alpha \neq -n^2 \beta$, and $n \in \integer$, then the metric $\mathbf b_{\alpha\beta}$ is non-degenerate pseudo-Hermitian. Otherwise, $\mathbf b_{\alpha\beta}$ is degenerate along a distribution of complex dimension 1. Moreover, we require $\beta \geq 0$ and $- \alpha < \beta$ in order to obtain a positively definite Hermitian metric. If $\alpha=1$ and $\beta=0$, then the metric is called Velling-Kirillov~\cite{Kirillov1, Kirillov4, Vel}.

Since the left action of $\Diff S^1$ on $\calF_0$ is complicated, these metrics can be difficult to study. We lift them to sub-Riemannian metric on $\Diff S^1$, where we have a formula for the left action given by \eqref{leftrightDiff}. Consider the splitting of $\Vect S^1$ into subspaces
$$\Vect S^1 = \Vect_0 S^1 \oplus \rot,$$
where $\Vect_0 S^1$ is the space of all vector fields with vanishing mean value on $S^1$ or in other words $\Vect_0 S^1$ is the kernel of the functional
\begin{equation} \label{eta0} \eta_0(x) = \frac{1}{2\pi} \int_0^{2\pi} x(\theta) d\theta.\end{equation}
Define $\calH$ as the subbundle of $T\Diff S^1$ obtained by left translations of $\Vect_0 S^1$. Horizontal curves with respect to $\calH$ are then curves satisfying
$$\eta_0\left(\kappa^\ell(\dot \gamma(t))\right) =\frac{1}{2\pi} \int_0^{2\pi} \frac{\dot \gamma(t,\theta)}{\gamma'(t,\theta)} \, d\theta = 0 \text{ for any } t \in I.$$
The subbundle $\mathcal H$ is an Ehresmann connection relative to the submersion $\pi: \Diff S^1 \to \calF_0$, since $\calH \oplus \ker d\pi = T\Diff S^1$. The bijective map
$$\begin{array}{rccrc} d_{\id} \pi: & \Vect_0 S^1& \to & T_{\id_{\unitD}}\calF_0 \cong & \calA_0 \\
& x \partial_\theta & \mapsto & & F \end{array}. $$
is given by forumla (see~\cite{Kirillov1}),
$$F(e^{i\theta}) = -\frac{i}{2}\Big(x(\theta) - iJx(\theta)\Big), \qquad Jx(\theta) = \frac{1}{2\pi} \text{p.v.} \int_0^{2\pi} \frac{x(t)}{\tan\left(\frac{t-\theta}{2} \right)} dt.$$
The operator $J$ is the Hilbert transform.

Let us define skew-symmetric bilinear operators on $\Vect S^1$ by the formula
$$\omega_{\alpha\beta}(x,y) = \frac{1}{2\pi}\int_0^{2\pi} \left(\alpha x(\theta) y'(\theta) + \beta x'(\theta) y''(\theta)\right)\, d\theta.$$
Then we have the relation
\begin{align*}
\mathbf b_{\alpha\beta}|_{\id_{\unitD}}\big(d_{\id}\pi x , d_{\id} \pi y \big) = i \omega_{\alpha\beta}(x, y) + \omega_{\alpha\beta}(Jx, y), \qquad x,y \in \Vect_0 S^1,
\end{align*}
see \cite{GMV} for details. The real part of $\mathbf b_{\alpha\beta}$ gives an inner product on $\Vect_0 S^1$. We can extend it to an inner product $(\,,\,)_{\alpha\beta}$ on $\Vect S^1$ by 
$$(x,y)_{\alpha\beta} = \omega_{\alpha\beta}\Big(J(x- \eta_0(x)), y-\eta_0(y)\Big)+ \eta_0(x) \eta_0(y), \qquad x,y \in \Vect S^1.$$
The  inner product on $\Vect_0 S^1$ corresponding to the form $\omega_{\alpha\beta}$ is obtained by
$$(x, y)_{\alpha\beta} = \omega_{\alpha\beta}(Jx,y).$$
%Observe that  
%\begin{equation} \label{eq:updownJswitch} 
%(x, y)_{\alpha\beta} = - \langle Jx', y \rangle^{\alpha\beta},\qquad x, y \in \Vect_0 S^1.
%\end{equation}
This inner product makes $\Vect_0 S^1$ and $\rot$ orthogonal.

Define a Riemannian metric $\mathbf g_{\alpha\beta}$ on $\Diff S^1$ by left translations and use $\mathbf h_{\alpha\beta}$ for its restriction to $\calH$. We want to study the geometry on $\Diff S^1$ with respect to the sub-Riemannian structure $(\calH, \mathbf h_{\alpha\beta})$.

\begin{remark}
If we extend the definition of $J$ to an almost complex structure on $\calH$ by left translation, then $(\Diff S^1, \calH, J)$ becomes an infinite dimensional CR-manifold \cite{Lempert}.
\end{remark}

\subsection{Normal sub-Riemannian geodesics} \label{sec:Normalgeo}
By a result in \cite{GMV}, we know that any two points on $\Diff S^1$ can be connected by a curve that is horizontal to $\calH$. The sub-Riemannian structure $(\calH, \mathbf h_{\alpha\beta})$ is invariant under the action of $\Rot$. Indeed, since $\Rot$ is finite dimensional, we restrict our procedure to Lie algebras. Recall that $\rot$ consists of constant vector fields and  $[\rot, \Vect_0 S^1] \subseteq \Vect_0 S^1$, because the derivative $x'$ has vanishing mean value for any $x \in \Vect S^1$. Since for any $x,y \in \Vect S^1$ we have
$$(x', y)_{\alpha\beta} = -(x, y')_{\alpha\beta},$$
the inner product is invariant under the action of $\ad(\rot)$. Hence, the conditions of Theorem~\ref{TheoremLeft} are satisfied with $K = \Rot$. In order to give a geodesic equation for the normal sub-Riemannian geodesics, we show that the adjoint to $\ad_x$ is well defined with respect to $(\cdot , \cdot)_{\alpha\beta}$. We  consider the inner product
\begin{equation} \label{L2metric} \langle x, y\rangle = \frac{1}{2\pi} \int_0^{2\pi} xy \, d\theta, \quad x,y\in \Vect S^1, \end{equation}
and
notice the relation $(x,y)_{\alpha\beta} = \langle L_{\alpha\beta} Jx' + \eta_0(x), y\rangle$ on $\Vect S^1$, where $L_{\alpha\beta}$ is the second-order differential operator $L_{\alpha\beta} = \beta \partial_\theta^2 - \alpha \cdot$. The adjoint to $\ad_x$ with respect to $\langle \cdot , \cdot \rangle$ is given by the expression
$$\ad_x^\top(y) = x y' + 2x'y.$$
If $\gamma$ is a normal sub-Riemannian geodesic with the left logarithmic derivative $u$, then $u$ is a solution to the equation $L_{\alpha\beta}\frac{d}{dt}Ju'(t)=\ad^{\top}_u(L_{\alpha\beta}Ju'+\lambda)$ for some $\lambda \in \rot \cong \real$, where the adjoint map $\ad^{\top}_u$ is taken with respect to the inner product $\langle\cdot, \cdot\rangle$ in \eqref{L2metric}. Indeed, we have
\begin{align*}
(\dot u, y)_{\alpha\beta} = & \langle L_{\alpha\beta} J \dot u', y \rangle \\
= & ( u+ \lambda, [u,y])_{\alpha\beta} = \langle L_{\alpha\beta} J u' + \lambda, [u,y]\rangle =
\langle \ad_u^\top(L_{\alpha\beta}J u' + \lambda), y\rangle
\end{align*}
for any $y \in \Vect S^1$.
Explicitly, $\gamma$ is a normal geodesic if and only if it is a solution to
\begin{equation} \label{NorKahl}
\kappa^\ell(\dot \gamma) = u,\qquad L_{\alpha\beta} J\dot u' = u L_{\alpha\beta} Ju'' + 2u' L_{\alpha\beta} Ju' + 2 \lambda u',\ \ \lambda\in\mathbb R.\end{equation}

For $(\alpha,\beta) = (1,0)$, this is a special case of the modified  Constantin-Lax-Majda (CLM) equation. For more information, see~\cite{BBHM, EKW}, where the Riemannian geometry with respect to the metric $\mathbf g_{1,0}$  is considered. It can be seen as the Sobolev $H^{1/2}$ metric on $\Diff S^1$.

If we solve equation \eqref{NorKahl} in the special case $\lambda = 0$, and project the solutions to $\calF_0$, we obtain the Riemannian geodesics for $\mathbf b_{\alpha\beta}$. 

\begin{remark}
There are other choices of metrics on $\Diff S^1$ that are interesting from the point of view of PDEs. For example, the geodesic equations with respect the left-invariant metric Riemannian metric induced by $\langle \cdot , \cdot \rangle$ defined as in \eqref{L2metric} is the Burgers' equation $\dot u = 3u u'$. Similarly, Riemannian geodesics with respect to the metric  $\langle x, y \rangle^{1,1} = -\langle L_{1,1} x,y \rangle$ is the non-extended Camassa-Holm equation. The equations for sub-Riemannian geodesics with respect to Sobolev metrics $\langle x, y \rangle^{\alpha,\beta} = -\langle L_{\alpha,\beta} x,y \rangle$, $x,y\in \Vect_0 S^1$, were obtained in~\cite{GMV}.
\end{remark}

%%%%%%%%%%%%%%%%%%%%%%%%%%%%%%%%

\subsection{A sub-Riemannian structure induced by the Weil-Petersson metric}
Let us now consider the group $\Mob$ of M\"obius transforms of the unit disk restricted to the circle~$S^1$. We use the natural embedding of the space $\Diff S^1/\Mob$ to the universal Teichm\"uller space $\mathcal T$ and restrict the Weil-Petersson metric from $\mathcal T$ to $\Diff S^1/\Mob$.
The Lie algebra $\mob$ of $\Mob$ can be considered as a Lie algebra of elements $\lambda \in \Vect S^1$ of the form
\begin{equation} \label{lambdamob} \lambda = \lambda_0 + w e^{i\theta} + \overline w e^{-i\theta}, \qquad \lambda_0 \in \real, \quad w \in \comp.\end{equation}
Let $\eta_0$ be as in \eqref{eta0} and define $\eta_1$ as the $\comp$-valued functional
$$\eta_1(x) = \frac{1}{2\pi} \int_0^{2\pi} x(\theta) e^{-i\theta} \, d\theta, \quad x\in \Vect S^1.$$
We denote the complement to $\mob$ in $\Vect S^1$ by $\Lied = \ker \eta_0 \cap \ker \eta_1$.

Let $\calD$ be the subbundle of $T\Diff S^1$ induced by left translations of $\Lied$. It is an Ehresmann connection with respect to the submersion $\pi\colon  \Diff S^1 \to \Diff S^1/\Mob$. Let us equip $\Diff S^1/\Mob$ with the Hermitian metric by restricting the Weil-Petersson metric from the universal Teuchm\"uller space. We lift it to a sub-Riemannian metric on $\calD$ by the same method described in Section \ref{sec:sRstructures}. This metric will be left-invariant and its restriction to $\Lied$ is given by the inner product
$$( x, y )_{-1,1} = \frac{1}{2\pi}\int_0^{2\pi} (Jx' y'' - Jx y') d\theta = \langle L_{-1,1} Jx', y \rangle \qquad \text{ for any } x,y \in \Lied,$$
see \cite{TT} for details. We extend the inner product $( \cdot ,\cdot )_{-1,1}$ to the whole $\Vect S^1$ by $( \cdot ,\cdot )_{-1,1}+\langle \cdot, \cdot \rangle$, where the metric $\langle \cdot, \cdot \rangle$ is defined on $\mob$, and  $\Lied$ and $\mob$ become  orthogonal with respect to the extended metric. 

By similar arguments as in Section \ref{sec:Normalgeo}, a curve $\gamma$ is a normal sub-Riemannian geodesic, if and only if, $u=\kappa^\ell(\dot \gamma)$ is a solution to the equation
$$L_{-1,1} J\dot u' + \dot\lambda = \ad_u^\top(L_{-1,1} u' + \lambda).$$
for some curve $t \mapsto \lambda(t)$ in $\mob$. However, the sub-Riemannian structutre, both the distribution and the metric, is not invariant under $\Mob$, so we cannot assume  $\lambda$ to be constant.
Solution to the above equation is more complicated because we can not apply Theorem~\ref{TheoremLeft}. Write $\lambda$ as in \eqref{lambdamob}, and define Fourier coefficients $c_n$ of $u$ by
$$u = \sum_{n=2}^\infty(c_n e^{in\theta} + \overline c_n e^{-in\theta}).$$
Observe that 
$$L_{-1,1} Ju' = i \sum_{n=2}^\infty (n^3-n)(c_n e^{in\theta} - \overline c_n e^{-in\theta}).$$
Computing
\begin{align*}
\ad_u^\top(L_{-1,1}J u' + \lambda) = & 2i \lambda_0 \sum_{n=2}^\infty n (c_n e^{in\theta} - \overline c_n e^{-in\theta}) \\
& +3i(\overline w c_2 e^{i\theta} - w \overline c_2 e^{-i\theta}) + 5i(w \overline c_3 e^{i2\theta} - \overline w c_2 e^{-i2\theta})\\
& + i \sum_{n=3}^\infty \left(((2n-1)w c_{n-1} + (2n+1) \overline w c_{n+1})e^{in\theta} \right. \\
& \qquad \left. - ((2n-1)\overline w \overline c_{n-1} + (2n+1) w \overline c_{n+1})e^{-in\theta} \right) \\ 
& + \sum_{n = 4}^\infty \sum_{k=2}^{n-2} i(2n-k) (k^3 -k) (c_k c_{n-k} e^{in \theta} - \overline c_k \overline c_{n-k} e^{-in \theta}) \\ 
& + \sum_{n=2}^\infty \sum_{k= 2}^\infty in (n^2 -1)(2k+n) (\overline c_k c_{k+n} e^{in\theta} - c_{k} \overline c_{k+n} e^{-in\theta}), \\
\end{align*}
we arrive at equations
$$\dot \lambda_0 = 0, \qquad \dot w = 3i\overline w c_2.$$
This means that $u$ must solve the equation
$$\dot v''' + \dot v' = u v'''' + u v'' + 2 u' v''' + 2 u' v' + 2\lambda u' + u\lambda'  - 3i(\overline w c_2 e^{i\theta} - w \overline c_2)e^{-i\theta},$$
where $v=Ju$.

If we solve it for $\lambda = 0$ and $w =0$, and project the solutions to $\Diff S^1/\Mob$, then we get geodesics of the Weil-Petersson metric.
%\begin{align*}
%\ad_u^\top(L_{-1,1}) = & 2i \lambda_0 \sum_{n=2}^\infty n (c_n e^{in\theta} - \overline c_n e^{-in\theta}) \\
%& +3i(\overline w c_2 e^{i\theta} - w \overline c_2 e^{-i\theta}) + 5i(w \overline c_3 e^{i2\theta} - \overline w c_2 e^{-i2\theta})\\
%& + i \sum_{m=1}^\infty \left(((2m-1)w c_{m-1} + (2m+1) \overline w c_{m+1})e^{im\theta} \right. \\
%& \qquad \left. - ((2m-1)\overline w \overline c_{m-1} + (2m+1) w \overline c_{m+1})e^{-im\theta} \right) \\ 
%& + \sum_{n = 4}^\infty \sum_{k=2}^{n-2} i(2n-k) (k^3 -k) (c_k c_{n-k} e^{in \theta} - \overline c_k \overline c_{n-k} e^{-in \theta}) \\ 
%& + \sum_{n=1}^\infty \sum_{k= 2}^\infty i(n - k) ((k+n)^3 -(k+n)) (\overline c_k c_{k+n} e^{in\theta} - c_{k} \overline c_{k+n} e^{-in\theta}) \\
%& + \sum_{n=1}^\infty \sum_{k=2}^\infty i(2n+k) (k^3 -k) (c_{n+k} \overline c_k e^{in\theta} -\overline c_{n+k} c_k e^{-in\theta})
%\end{align*}

\section{The Virasoro-Bott group} \label{sec:Vir}
Consider the universal cover group $\wDiff S^1$ of $\Diff S^1$, of orientation preserving diffeomorphisms $\phi: \real \to \real$ such that $\phi(\theta + 2\pi) = \phi(\theta) + 2\pi.$ The group $\wDiff S^1$ has a unique non-trivial central extension by $\real$ called the {\it Virasoro-Bott group}.
It can be described as follows. Define a Lie algebra $\Lieg_{\mu\nu}$ as the vector space $\Vect S^1 \oplus \real$, with the commutator
$$\big[(x ,a_1), (y , a_2)\big]  = \Big([x, y ], \omega_{\mu\nu}(x, y )\Big), \quad 
\omega_{\mu\nu}(x,y) = \frac{1}{2\pi}\int_0^{2\pi} \Big(\mu x(\theta) y'(\theta) + \nu x'(\theta) y''(\theta)\Big)\, d\theta.$$
The extension is trivial if and only if $\nu= 0$. All nontrivial extensions with $\nu \neq 0$ are isomorphic. The algebra 2-cocycle $\omega_{\mu\nu}$ is called the Gelfand-Fuchs cocycle. There is a unique simply connected Lie group $\calG_{\mu\nu}$ corresponding to each Lie algebra $\Lieg_{\mu\nu}$, $\calG_{01}=\Vir$. It can be considered as the set $\wDiff S^1 \times \real$ with the group operation
\begin{equation}\label{multVir}
(\phi_1, b_1) (\phi_2,b_2) = \Big(\phi_1 \circ \phi_2, b_1+ b_2 + \mu A(\phi_1, \phi_2) + \nu B(\phi_1,\phi_2)\Big),
\end{equation}
where
$$A(\phi_1, \phi_2) = \frac{1}{4\pi} \int_0^{2\pi} (-\phi_1 \circ \phi_2  + \phi_1 + \phi_2 - \id) d\theta,\quad\id\in\wDiff S^1,$$
$$B(\phi_1, \phi_2) = \frac{1}{4\pi} \int_0^{2\pi} \log (\phi_1 \circ \phi_2)' d \log \phi_2'.$$
The group $\calG_{\mu0}$ is isomorphic to the product group $\wDiff S^1 \times \real$, where the sign $(\times)$  means the direct product of groups, while for $\nu \neq 0$, the extension $\calG_{\mu\nu}$ is non-trivial. All the groups $\calG_{\mu\nu}$ with $\nu \neq 0$ are isomorphic and called the {\it Virasoro-Bott group} because of the Bott cocycle $B(\phi_1, \phi_2)$.

We define a sub-Riemannian structure on $\calG_{\mu\nu}$ in the following way. Define an inner product $\langle \cdot , \cdot \rangle$ on $\Lieg_{\mu\nu}$, by formula
$$\big\langle (x, a_1) , (y,a_2) \big\rangle = \frac{1}{2\pi} \int_0^{2\pi} x(\theta) y(\theta) \, d\theta + a_1 a_2.$$
Notice that with respect to this inner product, the adjoint of $\ad_{(x,a)}$ is given by
$$\ad_{(x,a)}^\top (y,a_0) = (xy'+2x'y +  a_0L_{\mu\nu}x', 0).$$
Consider a splitting $\Lieg_{\mu\nu} = \mathfrak e \oplus \Liek$ given by
$$\mathfrak e = (\Vect_0, 0), \quad \text{and} \quad \Liek = \{ (a_0 \partial_\theta, a) \in \Lieg_{\mu\nu} \, : \, a_0,a \in \real \}.$$
Notice that $\Liek$ is the Lie algebra of the subgroup
$$K = \{(\theta \mapsto \theta + b_0, b) \in \calG_{\mu\nu} \, : \, b_0, b \inÊ\real \},$$
which is an abelian subgroup, isomorphic to $\real^2$. Define a horizontal subbundle $\calE$ of $T\calG_{\mu\nu}$ by left translation of $\mathfrak e$. Similarly, we can define a metric $\widehat{\mathbf h}$ by left translation of the inner product $\langle \cdot , \cdot \rangle$, restricted to $\mathfrak e$.

We claim that the sub-Riemannian structure $(\calE, \widehat{\mathbf h})$ is invariant under the action of $K$. In order to see this, observe that $[\Liek, \mathfrak e] \subseteq \mathfrak e,$ and 
$$0 = \langle [(0, 1), (x,a_1)] , (y,a_2) \rangle = -\langle (x,a_1) , [(0,1),(y,a_2)] \rangle,$$
\begin{align*} & \langle [(1, 0), (x,a_1)] , (y,a_2) \rangle = -\langle (x',0) , (y,a_2) \rangle \\
= & \langle (x,a_1) , (y',0) \rangle = -\langle (x,a_1) , [(1,0),(y,a_2)] \rangle.\end{align*}
In the above equalities the first coordinate in $(0,1)$ and $(1,0)$ mean the constant 0- or 1-function respectively, and the second means just a  number.
Hence, we can apply Corollary \ref{cor1}, and we know that any left logarithmic derivative $u$ of a normal sub-Riemannian geodesic is a solution to $(\dot u, 0) = \ad_{(u,0)}^\top(u+ \lambda_1,\lambda_2), u(t) \in \Vect_0 S^1$, that is,
$$\dot u = 3uu' + 2 \lambda_1 u' + \lambda_2 L_{\nu\mu} u', \qquad \lambda_1, \lambda_2 \in \real.$$
For the special case $(\mu, \nu) = (0,1)$ and with normalization $\lambda_2 =1$, we obtain that $u+\lambda$ is a solution to the KdV-equation.

\begin{remark}
It is also possible to obtain the  Hunter-Saxton and the Camassa-Holm equations as Riemannian geodesic equations on the Virasoro-Bott group. A good overview of these results can be found in \cite{K}.
\end{remark}
\begin{remark}
We do not derive geodesic equations on the sub-Riemannian manifold  $(\Vir, \mathcal C, \mathbf{h}_{\mathcal C})$ defined in Section~1 in this paper, because the procedure is  the same as described in Section 3.4 with a extra KdV-type term.
\end{remark}


\begin{thebibliography}{99}
\bibitem{Arnold}
V.~I.~Arnold, {\it Sur la g\'eometrie diff\'erentielle des groupes de Lie de dimension infinie et ses applications \`a l'hydrodynamique des fluides parfaits}, Ann. Inst. Fourier, {\bf 16} (1966), 319--361.

%\bibitem{AC}  
%A.~Agrachev and M.~Caponigro, {\it Controllability on the group of diffeomorphisms}, Annales de l'Institut Henri Poincar\'e. Analyse Non Lin\'eaire {\bf 26} (2009), 2503--2509.

%\bibitem{ABGR} A.~Agrachev, U.~Boscain, J.~P~Gauthier, and F.~Rossi,
%{\it The intrinsic hypoelliptic Laplacian and its heat kernel on unimodular Lie groups.}
%J. Funct. Anal. {\bf 256} (2009), no. 8, 2621--2655.

\bibitem{AS}
A.~Agrachev and Yu.~Sachkov, {\it Control theory from the geometric viewpoint}, 
Encyclopaedia of Math. Sci., 87. Control Theory and Optimization, II. Springer-Verlag, Berlin, 2004,~412 pp.

%\bibitem{AgrachevSaryche 
%A.~Agrachev and A.~V.~Sarychev, {\it Abnormal sub-Riemannian geodesics: Morse index and rigidity}, Annales de l'Institut Henri PoincarŽ. Analyse non linŽaire, {\bf 13} (1996), no. 6, %635--690.

\bibitem{AM}
H.~Airault and P.~Malliavin, {\it Unitarizing probability measures for representations of Virasoro algebra}, J. Math. Pures Appl. {\bf 80} (2001), no. 6, 627--667.

%\bibitem{BB} F.~Baudoin, M~Bonnefont,
%{\it The subelliptic heat kernel on $\SU(2)$: representations, asymptotics and gradient bounds},
%Math. Z. {\bf 263} (2009), no. 3, 647--672.

\bibitem{BBHM} M.~Bauer, M.~Bruveris, P.~Harms, and P.~W.~Michor,
{\it Geodesic distance for right invariant Sobolev metrics of fractional order on the diffeomorphism group}, arXiv 1105.0327v2 (2011), 16 pp.

%\bibitem{BonnardT}
%B.~Bonnard and E.~Tr\'elat, {\it On the role of abnormal minimizers in sub-Riemannian geometry}, Ann. Fac. Sci. Toulouse Math. {\bf (6) 10} (2001), no. 3, 405--491.

 \bibitem{Bott}
R.~Bott, {\it On the characteristics classes of groups of diffeomorphisms}, Enseignment Math. (2) {\bf 23} (1977), 209--220.

\bibitem{deBranges}  
L.~de Branges, {\it A proof of the Bieberbach conjecture}, Acta Math. {\bf 154}
(1985),  no. 1--2,  137--152.

%\bibitem{Brockett} R.~W.~Brockett {\it Nonlinear control and differential geometry},  Proceedings of the International Congress of Mathematicians, Vol 1, 2 (Warsaw 1983), PWN, Warsaw, 1984, 1357--1368.

\bibitem{BH} 
{R.~Bryant and L.~Hsu}, {\it Rigidity of integral curves of rank $2$ distributions}, Invent. Math. {\bf 114} (1993), no. 2, 435--461.

%\bibitem{ChtourJT}
%Y.~Chitour, F.~Jean, and E.~Tr\'elat, {\it Genericity results for singular curves}, J. Differential Geom. {\bf 73} (2006), no. 1, 45--73.

\bibitem{Chow} 
W.~L.~Chow {\it \"Uber Systeme von linearen partiellen Differentialgleichungen erster Ordnung}, Math. Ann. {\bf 117} (1939), 98--105.

%\bibitem{ExpVir} 
%A.~Constantin, T.~Kappeler, B.~Kolev, and P.~Topalov,
%{\it On geodesic exponential maps of the Virasoro group}, Ann. Glob. Anal. Geom. {\bf 31},
%(2007), 155--180.

%\bibitem{Dubnikov}
%P.~I.~Dubnikov, S.~N.~Samborskii, {\it Controllability criterion for systems in a Banach space (Generalization of Chow's theorem)}, Ukraine Math. J. {\bf 32} (1979), no. 5, 649--653.

%\bibitem{Ellis} 
%D.~C.~P.~Ellis, F.~Gay-Balmaz,  D.~D.~Holm, and T.~S.~Ratiu, {\it Lagrange-Poincare field equations},  arXiv:0910.0876v1 (2009), 22~pp.

\bibitem{EKW} 
J.~Escher, B.~Kolev, and M.~Wunch, {\it The geometry of a vorticity model equation}, arXiv:1010.4844 (2010), 24~pp.

\bibitem{GelFu}
I.~M.~Gelfand and D.~B.~Fuks, {\it Cohomology of the Lie algebra of vector fields on the circle},
Functional Anal. Appl. {\bf 4} (1968), no. 2, 342--343.

%\bibitem{Giannoni} F.~Giannoni, P.~Piccione, and J.~Verderesi,
%{\it An approach to the relativistic brachistochrone problem by sub-Riemannian geometry},
%J. Math. Phys. {\bf 38} (1997), no. 12, 6367--6381. 

%\bibitem{GoleK}
%C.~Gol\'e and R.~Karidi, 
%{\it A note on Carnot geodesics in nilpotent Lie groups}, 
%J. Dynam. Control Systems {\bf 1} (1995), no. 4, 535--549. 

%\bibitem{Gromov}
%M.~Gromov, {\it Carnot-CarathŽodory spaces seen from within}, Sub-Riemannian geometry. 
%Progr. Math., 144, BirkhŠuser, Basel, 1996, 79--323.

\bibitem{GMV}
E.~Grong, I.~Markina, and A.~Vasil'ev, {\it Sub-Riemannian geometry on infinite-dimensional manifolds}, arXiv:1201.2251 (2012), 37~pp.


\bibitem{GV} 
E.~Grong and A.~Vasil'ev, {\it Sub-Riemannian and sub-Lorentzian geometry on $\SU(1,1)$ and on its universal cover}, J. Geom. Mech. {\bf 3} (2011), no. 2, 225--260.

\bibitem{GrGumVas}
E.~Grong, P.~Gumenyuk, and A.~Vasil'ev, {\it Matching univalent functions and conformal welding}, Ann. Acad. Sci. Fenn., Math. {\bf 34} (2009), 303--314.

\bibitem{Hamenstadt}
U.~Hamenst\"adt, {\it Some regularity theorems for Carnot-Carath\'eodory metrics}, J. Differential Geom. {\bf 32} (1990), no. 3,  819--850.

%\bibitem{Heintze}
%E.~Heintze and X.~Liu, {\it Homogeneity of infinite-dimensional isoparametric sub-manifolds}, Ann. Math. {\bf 149} (1999), 149--181.

%\bibitem{Hsu91}
%L.~Hsu, {\it Calculus of variations via the Griffiths formalism}, 
%J. Diff. Geom. {\bf 36} (1992), no. 3, 551--589.

%\bibitem{KhesinMisiolek}
%B.~Khesin and G.~Misio{\l}ek, {\it Euler equations on homogeneous spaces and Virasoro orbits}, Adv. Math. {\bf 176} (2003), no. 1, 116--144.

\bibitem{K}
B.~Khesin and R.~Wendt, {\it The geometry of infinite-dimensional groups}, Springer-Verlag,
Modern Surveys in Mathematics v. 51, 2009.

\bibitem{Kirillov202}
A.~A.~Kirillov, {\it The orbits of the group of diffeomorphisms of the circle and local Lie superalgebras}, Funct. Anal. Appl. {\bf 15} (1981), no. 2, 135--136.

\bibitem{Kirillov0} 
{ A.~A.~Kirillov and M. I. Golenishcheva-Kutuzova}, {\it Geometry of moments for the group of diffeomorphisms}, Preprint, Inst. Prikl. Mat. Im. M. V. Keldysha Akad. Nauk SSSR, {\bf 101}, 1986.

\bibitem{Kirillov1} 
{ A.~A.~Kirillov}, {\it K\"ahler structures on $K$-orbits of the group of diffeomorphisms of a circle}, Funct. Anal. Appl. {\bf 21}  (1987), no. 2, 42--45.

%\bibitem{Kirillov2} 
%{A.~A.~Kirillov}, {\it Infinite-dimensional Lie group: their orbits, invariants and representations}, Lect. Notes Math. {\bf 970} (1982), 101--123.

%\bibitem{Kirillov3} 
%{ A.~A.~Kirillov}, {\it Method of orbits and representations of infinite-dimensional
%Lie groups}, Geometry and Topology in Global Nonlinear Problems (in Russian), Voronezh
%(1984), 49--67.

\bibitem{Kirillov4} 
{ A.~A.~Kirillov and D.~V.~Yur'ev}, {\it K\"ahler geometry and the infinite-dimensional homogenous space $M=\Diff_+(S^1)/\mathrm{Rot}(S^1)$}, Funct. Anal. Appl. {\bf 21} (1987), no. 4, 284--294.

\bibitem{Unirreps}
{A.~A.~Kirillov}, {\it Geometric approach to discrete series of unirreps for vir}, J. Math. Pures Appl. {\bf 77} (1998), 735--746.

\bibitem{LieFre} 
{ O.~Kobayashi, Y.~Akira, Y.~Maeda, and H.~Omori},
{\it The theory of infinite-dimensional Lie groups and its applications},
Acta Appl. Math. {\bf 3} (1985),  no. 1, 71--106.

\bibitem{KrieglMichor}
{A.~Kriegl and P.~W.~Michor}, {\it The convenient setting of global analysis},
Mathematical Surveys and Monographs, 53. American Mathematical Society, Providence, RI, (1997).

\bibitem{KMGroup}
{A.~Kriegl and P.~W.~Michor}, {\it Regular infinite-dimensional Lie groups}, J. Lie Theory {\bf 7} (1997), no. 1, 61--99.

%\bibitem{Ledyaev} 
%{Yu.~S.~Ledyaev}, {\it On an infinite-dimensional variant of the Rashevski-Chow theorem},
%Dokl. Akad. Nauk Rus. {\bf 398} (2004), no. 6, 735--737.

%\bibitem{LeeM}
%E.~B.~Lee and L.~Markus, {\it Foundations of optimal control theory}, Second edition. Robert E. Krieger Publishing Co., Inc., Melbourne, FL, 1986. 576 pp.

\bibitem{Lempert}
{ L.~Lempert},  {\it The Virasoro group as a complex manifold},
Math. Res. Lett. {\bf 2} (1995), 479--495.

%\bibitem{LT}
%{J.~Lindenstrauss and Tzafriri, L.} {\it On the complemented subspaces problem},
%Israel J. Math. {\bf 9} (1971) 263--269. 

%\bibitem{MV}
%I.~Markina, A.~Vasil'ev, {\it Virasoro algebra and dynamics in the space of univalent functions}, Contemporary Math. {\bf 525} (2010), 85--116.

\bibitem{Milnor} 
{J.~Milnor}, {\it Remarks on infinite-dimensional Lie groups}, 
 `Relativit\'e, Groupes et Topologie II', B. DeWitt and R. Stora (Eds),
North-Holland, Amsterdam, 1984, 1007--1057.

\bibitem{Michor}
{P.~W.~Michor}, {\it Some geometric evolution equations arising as geodesic equations on groups of diffeomorphisms including the Hamiltonian approach}. Phase space analysis of partial differential equations, Progr. Nonlinear Differential Equations Appl., 69, Birkh\"auser Boston, Boston, MA, 2006, 133--215.

%\bibitem{MM}
%{P.~W.~Michor and D.~Mumford},
%{\it An overview of the Riemannian metrics on the spaces of curves using the Hamiltonian approach}, Appl. Comput. Harmonic Analysis {\bf 23} (2007), 74--113.

%\bibitem{MMVanish} 
%P.~W.~Michor and D.~Mumford, {\it Vanishing geodesic distance on spaces of sub-manifolds and diffeomorphisms}, Documenta Mathematica {\bf 10} (2005), 217--245.

%\bibitem{MR}
%{P.~W.~Michor and T.~S. Ratiu}, {\it On the Geometry of the Virasoro-Bott group}, J. Lie Theory, {\bf 8}, (1998), no. 2, 293--309.

%\bibitem{Mon94}
%R.~Montgomery, 
%{\it Abnormal minimizers}. 
%SIAM J. Control Optim. {\bf 32} (1994), no. 6, 1605--1620. 

%\bibitem{Montsingular}
%R.~Montgomery, 
%{\it A survey of singular curves in sub-Riemannian geometry},  J. Dynam. Control Systems {\bf 1} (1995), 49--90.

\bibitem{Mon}
R.~Montgomery, 
{\it A tour of subriemannian geometries, their geodesics and applications}, Mathematical Surveys and Monographs, {\bf 91}. American Mathematical Society, Providence, RI, 2002. 


%\bibitem{Mumford}
%D.~Mumford, {\it Pattern theory: the mathematics of perception}, Proceedings ICM 2002, vol. 1, 401--422.

\bibitem{NV}
S.~Nag, A.~Verjovsky, {\it $\Diff S^1$ and the Teichm\"uller spaces}, Commun. Math. Phys. {\bf 130} (1990), 123--138.

%\bibitem{Ovsienko}
%V. ~Yu.~Ovsienko and B.~A.~Khesin, {\it The super Korteweg-de Vries equation as an Euler equation},  Funct. Anal. Appl. {\bf 24} (1990), no. 1,  33--40.

\bibitem{Rashevsky} 
{P.~K.~Rashevski{\u\i}}, {\it About connecting two points of complete nonholonomic space by admissible curve}, Uch. Zapiski Ped. Inst. K.~Liebknecht {\bf 2} (1938), 83--94.

\bibitem{Segal}
G.~Segal, {\it Unitary representations of some infinite-dimensional groups}, Comm. Math. Phys. {\bf 80} (1981), no. 3, 301--342.

\bibitem{Str1}
R.~S.~Strichartz, {\it Sub-Riemannian geometry},  J. Differential Geom. {\bf 24}  (1986),  no. 2, 221--263.

\bibitem{Str2}
R.~S.~Strichartz, {\it Corrections to: "Sub-Riemannian geometry" J. Differential Geom. {\bf 24} (1986), no. 2, 221--263;} J. Differential Geom.  {bf 30}  (1989),  no. 2, 595--596. 

\bibitem{TT}
L.~A.~Takhtajan and L.-P.~Teo, {\it Weil-Petersson metric on the universal Teichm\"uller space}, Mem. Amer. Math. Soc. {\bf 183} (2006), no. 861. 

%\bibitem{Sussmann}
%H.~J.~Sussmann and V.~Jurdjevic, {\it Controllability of nonlinear systems}, J. Diff. Equations {\bf 12} (1972), no. 3, 95--116.

\bibitem{Vel}
J.~A.~Velling, {\it A projectively natural metric on Teichm\"uller's spaces}, unpublished manuscript.

\bibitem{Virasoro}
M.~ A.~ Virasoro, {\it Subsidiary conditions and ghosts in dual-resonance models}, Phys. Rev.  D1 (1970), 2933--2936.


\end{thebibliography}
\end{document}